\providecommand{\U}[1]{\protect\rule{.1in}{.1in}}
\numberwithin{equation}{section}
\newtheorem{theorem}{Theorem}[section]
\newtheorem{lemma}[theorem]{Lemma}
\newtheorem{definition}[theorem]{Definition}
\def\<{\langle}
\def\>{\rangle}
\def\d{{\rm d}}
\def\L{\mathcal{L}}
\def\div{{\rm div}}
\def\E{\mathbb{E}}
\def\N{\mathbb{N}}
\def\P{\mathbb{P}}
\def\R{\mathbb{R}}
\def\T{\mathbb{T}}
\def\Z{\mathbb{Z}}
\def\eps{\varepsilon}
\begin{document}

\title{Regularization by transport noise for 3D MHD equations}
\author{Dejun Luo\footnote{Email: luodj@amss.ac.cn. Key Laboratory of RCSDS, Academy of Mathematics and Systems Science, Chinese Academy of Sciences, Beijing, China and School of Mathematical Sciences, University of the Chinese Academy of Sciences, Beijing, China.} }

\maketitle

\begin{abstract}
We consider the problem of regularization by noise for the three dimensional magnetohydrodynamical (3D MHD) equations. It is shown that, in a suitable scaling limit, multiplicative noise of transport type gives rise to bounds on the vorticity fields of the fluid velocity and magnetic fields. As a result, if the noise intensity is big enough, then the stochastic 3D MHD equations admit a pathwise unique global solution for large initial data, with high probability.
\end{abstract}

\textbf{Keywords:} magnetohydrodynamical equations, vorticity, well posedness, regularization by noise, transport noise

\textbf{MSC (2020):} primary 60H50; secondary 60H15

\section{Introduction}

The topic of regularization by noise for ODEs and PDEs has attracted a lot of attention in the past two decades. There are mainly two different types of noises: additive noise and multiplicative noise. Additive noise has been very successful in improving the well posedness of finite dimensional ODEs, see e.g \cite{Kry-Roeck, FedFla11} (there are many extensions to SDEs with nondegenerate multiplicative noise, see \cite{Veret, Zha05, Zha11} among others); by the method of characteristics, one can solve linear transport equations perturbed by transport noise  \cite{FGP, FedFla13}. Nondegenerate additive noise was also used to regularize abstract stochastic evolution equations on infinite dimensional Hilbert space \cite{DaPFla, DFPR, DFRV}, but the nonlinearity treated therein are not of the advection form in fluid dynamics. The 3D Navier-Stokes equations driven by additive noise have been studied in \cite{FlaRom02, DapDeb, FlaRom08}, yielding some regularity results not available in the deterministic theory.

Concerning multiplicative noise, we focus on the one of transport type modeling perturbations due to some background motion, see \cite{BCF, MikRoz} for some early results. Later on, transport noise was shown to regularize some inviscid models, including the point vortex model of 2D Euler and mSQG equations \cite{FGP11, LuoS}, and point charges of 1D Vlasov-Poisson equations \cite{DFV}. In the recent works \cite{FlaLuoAoP, Galeati, FGL, LuoSaal}, it was shown that linear transport equations and Euler type equations perturbed by transport noises converge, under a suitable scaling of the noises, to the corresponding (deterministic) parabolic equations; moreover, the limiting equations have large viscosity coefficient if the noise intensity is big enough. We have applied this idea to the vorticity form of 3D Navier-Stokes equations \cite{FlaLuo21}, obtaining long-term well posedness for large initial data, with high probability; see \cite{FHLN20} for similar regularization by a deterministic field and \cite{FGL21} for further results. Such phenomenon of dissipation enhancement has some similarity with the theory of stabilization by noise \cite{Arnold, ArnoldCW}, and it has been studied intensively in the deterministic setting, see \cite{Constantin, Zl10, BCZ17, FI19, Iyer, WZZ20} and the references therein. We refer to \cite{Gyongy, ButMyt, GassiatGess, GessMau} for other regularization by noise results and \cite{Gess, BF20} for surveys in the field.

The purpose of this paper is to apply the idea in \cite{FlaLuo21} to the 3D MHD equations on the torus $\T^3= \R^3/\Z^3$:
  \begin{equation}\label{MHD}
  \left\{ \aligned
  & \partial_t u + u\cdot\nabla u - S B\cdot\nabla B + \nabla\big(p + S|B|^2/2 \big)= Re^{-1} \Delta u, \\
  & \partial_t B + u\cdot\nabla B - B\cdot\nabla u = Rm^{-1} \Delta B,\\
  & \nabla\cdot u=0, \quad \nabla\cdot B=0,
  \endaligned \right.
  \end{equation}
which describe the motion of an electrically conductive fluid in a magnetic field. Here, $u=(u_1, u_2, u_3)$ is the fluid velocity field and $B=(B_1, B_2, B_3)$ the magnetic field, $Re$ is the Reynolds number and $Rm$ is the magnetic Reynolds, and $S=M^2/Re Rm$ with $M$ being the Hartman number. Similarly to the theory of 3D Navier-Stokes equations (cf. \cite{DuvLions, SerTem}), for any $L^2$-initial data $(u_0,B_0)$, there exist global weak solutions to \eqref{MHD} (uniqueness is open); while if $(u_0,B_0)$ has $H^1$-regularity, then one can show the local existence of a unique strong solution (its global existence remains open). Global well posedness of the Cauchy problem for a 3D incompressible MHD type system was proved in \cite{LinZha} for smooth initial data which are close enough to the equilibrium state $(x_3,0)$, see also \cite{AZ17} and \cite{WZ18} for related results. There are also lots of studies on the stochastic 2D and/or 3D MHD equations, see \cite{SriSun, BarDaP, Sango, Zeng}. According to the classical theory of 3D MHD equations, the vorticity (in particular, its direction field) plays a key role in the well posedness results, see e.g. \cite{HeXin}.

We are concerned with the vorticity formulation of \eqref{MHD}. To this end, let
  $$\xi= \nabla\times u,\quad \eta= \nabla\times B $$
be the vorticity fields of the velocity and magnetic vector fields, respectively; conversely, $u$ (resp. $B$) can be expressed by $\xi$ (resp. $\eta$) via the Biot-Savart law. As in \cite[(2.1)]{HeXin}, the above system can be rewritten as
  \begin{equation}\label{MHD-vorticity}
  \left\{ \aligned
  & \partial_t \xi + \L_u \xi - S \L_B \eta = Re^{-1} \Delta \xi, \\
  & \partial_t \eta + \L_u \eta - \L_B\xi - 2T(B,u) = Rm^{-1} \Delta \eta,
  \endaligned \right.
  \end{equation}
where $\L_u \xi:= u\cdot\nabla \xi - \xi \cdot\nabla u$ is the Lie derivative, and ($\partial_i= \frac{\partial}{\partial x_i},\, i=1,2,3$)
  $$T(B,u)= \begin{pmatrix}
  \partial_2 B \cdot \partial_3 u - \partial_3 B \cdot \partial_2 u \\
  \partial_3 B \cdot \partial_1 u - \partial_1 B \cdot \partial_3 u \\
  \partial_1 B \cdot \partial_2 u - \partial_2 B \cdot \partial_1 u
  \end{pmatrix}. $$
The exact values of $Re$, $Rm$ and $S$ are not important in our analysis below, and thus we assume they are all equal to 1 for simplicity. To write the system \eqref{MHD-vorticity} in a more compact form, we introduce the notation $\Phi= (\xi, \eta)^\ast$ and denote the nonlinear part as
  $$b(\Phi, \Phi)= \begin{pmatrix} b_1(\Phi, \Phi) \\ b_2(\Phi, \Phi) \end{pmatrix}= \begin{pmatrix} \L_u \xi - \L_B \eta \\ \L_u \eta - \L_B\xi - 2T(B,u) \end{pmatrix}; $$
then, the system \eqref{MHD-vorticity} can be simply written as
  \begin{equation}\label{MHD-vorticity-1}
  \partial_t \Phi + b(\Phi, \Phi) = \Delta \Phi.
  \end{equation}

We introduce a few notations of functional spaces. For $s\in \R$, let $H^s(\T^3, \R^3)$ be the usual Sobolev space of vector fields on $\T^3$, endowed with the norm $\|\cdot \|_{H^s} $; $H^0(\T^3, \R^3)$ coincides with $L^2(\T^3, \R^3)$. For simplicity, we assume in this paper that the vector fields have zero mean, and define the spaces
  $$\aligned
  H &= \big\{\Phi=(\xi, \eta)^\ast: \xi, \eta \in L^2(\T^3, \R^3), \nabla\cdot \xi = \nabla\cdot \eta = 0\big\}, \\
  V &= \big\{\Phi=(\xi, \eta)^\ast \in H: \xi, \eta \in H^1(\T^3, \R^3)\big\}.
  \endaligned $$
We write $\<\cdot, \cdot\>$ or $\<\cdot, \cdot\>_{L^2}$ for the inner product in $H$, with the corresponding norm $\|\cdot \|_H= \|\cdot \|_{L^2}$; $\|\cdot \|_V= \|\cdot \|_{H^1}$ is the norm in $V$. Note that in the periodic case, we do not have to distinguish the spaces for the velocity and the magnetic fields. It is well known that the system \eqref{MHD-vorticity-1} admits a unique local solution for general initial data $\Phi_0=(\xi_0, \eta_0)\in H$; moreover, there exists a small $r_0>0$ such that if $\|\Phi_0 \|_H \leq r_0$, then the unique solution exists globally in time.

Inspired by \cite{FlaLuo21} (see in particular Section 1.2 therein), we perturb the vorticity equations \eqref{MHD-vorticity-1} by transport noise:
  \begin{equation}\label{stoch-MHD}
  \partial_t \Phi + b(\Phi, \Phi) = \Delta \Phi + \Pi\big(\dot W \cdot\nabla \Phi\big), \\
  \end{equation}
where $\Pi$ is the Leray projection operator and $\dot W $ denotes the formal time derivative of a spatially divergence free noise $W(t,x)$ on $\T^3$, and
  \begin{equation}\label{noises}
  \Pi\big(\dot W \cdot\nabla \Phi\big) = \begin{pmatrix} \Pi\big(\dot W \cdot\nabla \xi\big) \\ \Pi\big(\dot W \cdot\nabla \eta\big) \end{pmatrix} .
  \end{equation}
We apply Leray's projection $\Pi$ to the noise terms to make them divergence free; otherwise, the equations are in general not meaningful since the other quantities are all divergence free. We remark that, since the noise $W$ is spatially divergence free, the stochastic 3D MHD equations \eqref{stoch-MHD} have the same energy estimate as the deterministic system \eqref{MHD-vorticity-1}; this implies that the stochastic equations \eqref{stoch-MHD} have unique local solutions for general initial data, while the solutions are global for initial data satisfying $\|\Phi_0 \|_H \leq r_0$, where $r_0$ is the same parameter as in the previous paragraph. It seems that, at first glance, transport noise has no regularizing effect on the 3D MHD system; however, following \cite{FlaLuo21}, we shall show that the noise enhances dissipation in a suitable scaling limit, and thus leads to long-term well posedness for big initial data, with large probability.

We use the same noise as in \cite{FlaLuo21} to perturb the equations:
  $$W(t,x)= \frac{C_\nu}{\|\theta \|_{\ell^2}} \sum_{k\in \Z^3_0} \sum_{\alpha=1}^2 \theta_k \sigma_{k,\alpha}(x) W^{k,\alpha}_t.$$
Here, for some $\nu>0$, $C_\nu= \sqrt{3\nu/2}$ denotes the intensity of the noise; $\Z^3_0$ is the collection of nonzero lattice points and $\theta\in \ell^2 := \ell^2(\Z^3_0)$, the latter being the space of square summable real sequences indexed by $\Z^3_0$. Next, $\{\sigma_{k,\alpha} \}_{ k\in \Z^3_0, \alpha=1,2}$ are divergence free vector fields on $\T^3$ and $\{W^{k,\alpha} \}_{k\in \Z^3_0, \alpha= 1,2}$ are independent standard complex Brownian motions; see Section \ref{subsec-notations} for their precise definitions. Now, the stochastic equations studied in this paper have the following precise form:
  \begin{equation}\label{stoch-MHD.1}
  \d \Phi + b(\Phi, \Phi)\,\d t = \Delta \Phi\,\d t + \frac{C_\nu}{\|\theta \|_{\ell^2}} \sum_{k, \alpha} \theta_k \Pi(\sigma_{k, \alpha} \cdot\nabla \Phi) \circ \d W^{k, \alpha},
  \end{equation}
where $\sum_{k, \alpha}$ stands for $\sum_{k\in \Z^3_0} \sum_{\alpha=1}^2$, $\Pi(\sigma_k \cdot\nabla \Phi)$ is understood as in \eqref{noises} and $\circ$ means we are making use of the Stratonovich stochastic differentiation. We shall always consider those $\theta\in \ell^2$ having only finitely many nonzero components, and assume that
  \begin{equation}\label{theta-sym}
  \theta_k =\theta_l \quad \mbox{for all } |k|=|l|.
  \end{equation}
We can rewrite \eqref{stoch-MHD.1} in the It\^o form as
  \begin{equation}\label{stoch-MHD.2}
  \d \Phi + b(\Phi, \Phi)\,\d t = \big[\Delta \Phi + S_\theta(\Phi) \big]\,\d t + \frac{C_\nu}{\|\theta \|_{\ell^2}} \sum_{k, \alpha} \theta_k \Pi(\sigma_{k,\alpha} \cdot\nabla \Phi)\, \d W^{k,\alpha} ,
  \end{equation}
where the Stratonovich-It\^o corrector is given by
  $$ S_\theta(\Phi)= \frac{C_\nu^2}{\|\theta \|_{\ell^2}^2} \sum_{k,\alpha} \theta_k^2\, \Pi\big[\sigma_{k,\alpha}\cdot\nabla \Pi(\sigma_{-k, \alpha} \cdot\nabla \Phi) \big]. $$
It is a symmetric second order differential operator on vector fields.

Given $\Phi_0 \in H$, we denote by $\Phi(t;\Phi_0, \nu, \theta)$ the unique local solution to \eqref{stoch-MHD.2} with the maximal time of existence $\tau= \tau(\Phi_0, \nu, \theta)$. Recall that $\nu>0$ measures the noise intensity. We write $B_H(K)$ for the ball in $H$ centered at the origin with radius $K>0$. Here is the main result of our paper.

\begin{theorem}\label{thm-main}
Given $K>0$ and small $\eps>0$, there exist big $\nu>0$ and $\theta\in \ell^2$ such that
  $$\P(\tau(\Phi_0, \nu, \theta) =+\infty )> 1-\eps\quad \mbox{for all } \Phi_0\in B_H(K). $$
Equivalently, with large probability uniformly over $\Phi_0\in B_H(K)$, the unique solution to \eqref{stoch-MHD.2} with initial data $\Phi_0$ exists globally in time.
\end{theorem}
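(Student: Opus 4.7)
The plan is to adapt the scaling-limit strategy of \cite{FlaLuo21} to the coupled MHD vorticity system. The central observation is that, under the symmetry \eqref{theta-sym} and for $\theta$ whose mass is spread across many Fourier modes (equivalently $\|\theta\|_{\ell^\infty}/\|\theta\|_{\ell^2}\to 0$), the Stratonovich--It\^o corrector $S_\theta$ acts as a scalar multiple $\kappa\nu\,\Delta$ of the Laplacian on divergence-free fields, while the martingale quadratic variation in \eqref{stoch-MHD.2} vanishes in the same limit. On any fixed time window the stochastic system should therefore approach the deterministic parabolic system
\begin{equation*}
\partial_t \Phi + b(\Phi,\Phi) = (1+\kappa\nu)\,\Delta\Phi,
\end{equation*}
whose effective viscosity can be made arbitrarily large by tuning $\nu$.

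First, I would record uniform a priori estimates for \eqref{stoch-MHD.2}. Because the noise is spatially divergence free, one has $\<\Pi(\sigma_{k,\alpha}\cdot\nabla\Phi),\Phi\>=0$, so the Stratonovich $L^2$ energy identity for $\Phi$ has the same structure as the deterministic one, yielding bounds on $\E\|\Phi^{\tau_R}\|_{L^\infty_t H}^2$ and $\E\|\Phi^{\tau_R}\|_{L^2_t V}^2$ that are uniform in $(\nu,\theta)$ up to $\tau_R:=\inf\{t:\|\Phi(t)\|_H\ge R\}$. Next I would prove the scaling limit: for fixed $\nu$ and a sequence $\theta^{(n)}$ satisfying \eqref{theta-sym} with $\|\theta^{(n)}\|_{\ell^\infty}/\|\theta^{(n)}\|_{\ell^2}\to 0$, the laws of $\Phi^{(n),\tau_R}$ are tight on $C([0,T];H_{\mathrm{weak}})\cap L^2(0,T;V)$, and every limit point solves the enhanced-viscosity deterministic system above. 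The value of $\kappa$ is computed from the explicit $SO(3)$-invariant structure of $\{\sigma_{k,\alpha}\}$ together with the isotropy produced by \eqref{theta-sym}; since the noise in \eqref{noises} acts diagonally on $(\xi,\eta)$, the same constant $\kappa$ governs both components.

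Third, I would exploit deterministic enhanced dissipation for the limit equation. For the MHD vorticity system with viscosity $1+\kappa\nu$ and initial data in $B_H(K)$, parabolic smoothing and interpolation (combined with a Beale--Kato--Majda-type criterion adapted to MHD) should show that if $\nu$ is large enough, the solution stays globally in a bounded subset of $V$ and its $H$-norm decays into $B_H(r_0/2)$ within an explicit time $T^\ast=T^\ast(K,\nu)$. Combining this decay with the probabilistic closeness from the scaling limit yields $\P(\|\Phi(T^\ast)\|_H\le r_0 \text{ and } \tau>T^\ast)>1-\eps$ for sufficiently spread-out $\theta$; the small-data global existence stated between \eqref{stoch-MHD} and \eqref{stoch-MHD.1} then takes over from time $T^\ast$, giving $\P(\tau=+\infty)>1-\eps$ uniformly over $\Phi_0\in B_H(K)$.

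The main obstacle I anticipate is the inhomogeneous term $2T(B,u)$ in the second vorticity equation: unlike $\L_u\xi$ and $\L_B\eta$, it produces no antisymmetric cancellation against $\eta$ in $L^2$ and couples first derivatives of $u$ and $B$ bilinearly, so in the Biot--Savart picture it behaves like a critical quadratic interaction of $(\xi,\eta)$. Producing a single threshold controlling $\xi$ and $\eta$ simultaneously, and growing appropriately with $\nu$, will most likely require a coupled Gr\"onwall estimate at a critical Sobolev regularity such as $H^{1/2}$, with the enhanced dissipation absorbing the critical scaling. This is the step where the MHD analysis genuinely departs from the Navier--Stokes treatment of \cite{FlaLuo21} and is the one most likely to force delicate quantitative choices of $\nu$ and $\theta$.
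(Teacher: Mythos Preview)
Your overall strategy---scaling limit to an enhanced-viscosity deterministic MHD system, followed by deterministic decay into the small-data regime---is exactly the paper's. However, two technical choices in your proposal would cause the argument to fail as written, and one concern is misplaced.

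\textbf{The cut-off must be in a weak norm, not in $H$.} You set $\tau_R=\inf\{t:\|\Phi(t)\|_H\ge R\}$ and plan to show $\tau_R>T^\ast$ with high probability via the scaling limit. But the uniform bounds you obtain are only $L^\infty(0,T;H)\cap L^2(0,T;V)$, so Aubin--Lions yields tightness in $L^2(0,T;H)\cap C([0,T];H^{-\delta})$, \emph{not} in $C([0,T];H)$ or $L^2(0,T;V)$. Closeness in these topologies does not control $\sup_t\|\Phi(t)\|_H$, so you cannot verify that the $H$-norm threshold is unattained. The paper instead multiplies the nonlinearity by $f_R(\|\Phi\|_{H^{-\delta}})$; this cut-off equation is globally well posed with $\P$-a.s.\ uniform bounds independent of $(\nu,\theta)$, and crucially $f_R$ is continuous in the very topology in which convergence holds, so one can conclude $\|\Phi^N\|_{C([0,T];H^{-\delta})}<R$ with high probability and thereby remove the cut-off.

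\textbf{The small-data handoff cannot occur at a fixed time.} You aim for $\P(\|\Phi(T^\ast)\|_H\le r_0)>1-\eps$ at a deterministic $T^\ast$, but pointwise-in-time $H$-norm control is not available from convergence in $C([0,T];H^{-\delta})$ or $L^2(0,T;H)$. The paper uses the $L^2$-in-time convergence instead: choosing $T$ so that the deterministic solution satisfies $\|\Phi\|_{L^2(T-1,T;H)}\le r_0/2$, the triangle inequality gives $\|\Phi^N\|_{L^2(T-1,T;H)}\le r_0$ on the high-probability event, which guarantees a \emph{random} time $t(\omega)\in[T-1,T]$ with $\|\Phi^N(t(\omega),\omega)\|_H\le r_0$. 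One then restarts the (uncut) equation from $t(\omega)$.

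\textbf{The term $T(B,u)$ is not the obstacle.} In the $L^2$ energy identity one has $|\langle\eta,T(B,u)\rangle|\le C\|\eta\|_{L^3}\|\nabla B\|_{L^3}\|\nabla u\|_{L^3}\le C\|\eta\|_{H^{1/2}}^2\|\xi\|_{H^{1/2}}$, which is of exactly the same form as the other trilinear terms and is absorbed by the enhanced dissipation via interpolation and Young's inequality. No separate critical-regularity analysis is needed; the coupled differential inequality closes directly in $\|\Phi\|_{L^2}^2$ and yields the threshold $\nu$ explicitly.
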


For the sake of readers' understanding, we briefly describe here the ideas of proof. Similarly to \cite{FlaLuo21}, we shall take a sequence $\{\theta^N \}_{N\geq 1} \subset \ell^2$ satisfying
  \begin{equation}\label{theta-N}
  \theta^N_k= \frac{{\bf 1}_{\{N\leq |k|\leq 2N\} }}{|k|^\kappa}, \quad k\in \Z^3_0,
  \end{equation}
where $\kappa>0$ is any fixed parameter. It is obvious that
  \begin{equation}\label{theta-N.1}
  \lim_{N\to \infty} \frac{\|\theta^N \|_{\ell^\infty}}{\|\theta^N \|_{\ell^2}} =0;
  \end{equation}
more importantly, for any smooth divergence free vector field $v$ on $\T^3$, it was proved in \cite[Section 5]{FlaLuo21} that
  \begin{equation}\label{theta-N.2}
  \lim_{N\to \infty} S_{\theta^N}(v) = \frac35 \nu \Delta v \quad \mbox{holds in } L^2(\T^3,\R^3).
  \end{equation}
Consider the following sequence of stochastic 3D MHD equations
  \begin{equation}\label{stoch-MHD-N}
  \d \Phi_N + b(\Phi_N, \Phi_N)\,\d t = \big[\Delta \Phi_N + S_{\theta^N}(\Phi_N) \big]\,\d t + \frac{C_\nu}{\|\theta^N \|_{\ell^2}} \sum_{k,\alpha} \theta^N_k \Pi(\sigma_{k,\alpha} \cdot\nabla \Phi_N)\, \d W^{k,\alpha}.
  \end{equation}
In fact, we shall introduce a suitable cut-off in the nonlinear part $b(\Phi_N, \Phi_N)$, see Section 2 for details. Thanks to the limit \eqref{theta-N.1} and $L^2$-bounds on the solutions, we can show that the martingale part in \eqref{stoch-MHD-N} will vanish in the weak sense. Furthermore, the key result \eqref{theta-N.2} implies that the limit equation is
  \begin{equation}\label{eq-limit.1}
  \partial_t \Phi + b(\Phi, \Phi) = \Big(1+ \frac35 \nu\Big) \Delta \Phi.
  \end{equation}
This equation has an enhanced dissipation which comes from the noise intensity $\nu$; hence, \eqref{eq-limit.1} is much better posed than the deterministic system \eqref{MHD-vorticity-1}. Theorem \ref{thm-main} will be proved by using the fact that the solutions of \eqref{stoch-MHD-N} are close to those of \eqref{eq-limit.1}.

The paper is organized as follows. In Section \ref{sec-2}, we first introduce the explicit choices of the vector fields $\{\sigma_{k,\alpha} \}_{ k\in \Z^3_0, \alpha=1,2}$ and the complex Brownian motions  $\{W^{k,\alpha} \}_{ k\in \Z^3_0, \alpha=1,2}$, then we prove the well posedness of stochastic 3D MHD equations with a cut-off. The proof of Theorem \ref{thm-main} will be given in  Section \ref{sec-proof}, where we first prove a scaling limit result (i.e. Theorem \ref{thm-limit}) which is a crucial step for proving the main result. Finally we present in the appendix a sketched proof of the key limit \eqref{theta-N.2}.

\section{Notations and global well posedness of stochastic 3D MHD equations with cut-off} \label{sec-2}

We first introduce in Section \ref{subsec-notations} the definitions of divergence free vector fields $\{\sigma_{k,\alpha} \}_{ k\in \Z^3_0, \alpha=1,2}$ and the complex Brownian motions. In Section \ref{subsec-well-posedness}, we apply the Galerkin approximation and compactness method to prove the existence of weak solutions to stochastic 3D MHD equations \eqref{stoch-MHD-cut} with a cut-off; then, we show that pathwise uniqueness holds for the system, thus, by Yamada-Watanabe type result, we conclude that \eqref{stoch-MHD-cut} admits a unique strong solution.

\subsection{Notations}\label{subsec-notations}

This part is taken from the beginning of \cite[Section 2]{FlaLuo21}. Recall that $\Z^3_0= \Z^3\setminus \{0\}$ is the nonzero lattice points; let $\Z^3_0= \Z^3_+ \cup \Z^3_-$ be a partition of $\Z^3_0$ such that
  $$\Z^3_+ \cap \Z^3_-= \emptyset, \quad \Z^3_+ = - \Z^3_-.$$
Let $L^2_0(\T^3, \mathbb C)$ be the space of complex valued square integrable functions on $\T^3$ with zero average; it has the CONS:
  $$e_k(x)= e^{2\pi {\rm i} k\cdot x}, \quad x\in \T^3,\, k\in \Z^3_0,$$
where ${\rm i}$ is the imaginary unit. For any $k\in \Z^3_+$, let $\{a_{k,1}, a_{k,2}\}$ be an orthonormal basis of $k^\perp := \{x\in \R^3: k\cdot x=0\}$ such that $\{a_{k,1}, a_{k,2}, \frac{k}{|k|}\}$ is right-handed. The choice of $\{a_{k,1}, a_{k,2}\}$ is not unique. For $k\in \Z^3_-$, we define $a_{k,\alpha} = a_{-k,\alpha}$, $\alpha=1,2$. Now we can define the divergence free vector fields:
  \begin{equation}\label{vector-fields}
  \sigma_{k,\alpha}(x) = a_{k,\alpha} e_k(x), \quad x\in \T^3,\,  k\in \Z^3_0,\, \alpha=1,2.
  \end{equation}
Then $\{\sigma_{k,1}, \sigma_{k,2}:  k\in \Z^3_0\}$ is a CONS of the subspace $H_{\mathbb C} \subset L^2_0(\T^3,\mathbb C^3)$ of square integrable and divergence free vector fields with zero mean. A vector field
  $$v= \sum_{k,\alpha} v_{k,\alpha} \sigma_{k,\alpha} \in H_{\mathbb C}$$
has real components if and only if $\overline{v_{k,\alpha}} = v_{-k, \alpha}$.

Next, we introduce the family $\{W^{k,\alpha}: k\in \Z^3_0, \alpha=1,2\}$ of complex Brownian motions. Let
  $$\big\{B^{k,\alpha}: k\in \Z^3_0,\, \alpha=1,2 \big\}$$
be a family of independent standard real Brownian motions; then the complex Brownian motions can be defined as
  $$W^{k,\alpha} = \begin{cases}
  B^{k,\alpha} + {\rm i} B^{-k,\alpha}, & k\in  \Z^3_+;\\
  B^{-k,\alpha} - {\rm i} B^{k,\alpha}, & k\in  \Z^3_-.
  \end{cases}$$
Note that $\overline{W^{k,\alpha}}= W^{-k,\alpha}\, (k\in\Z^3_0, \alpha=1,2)$, and they have the following quadratic covariation:
  \begin{equation}\label{qudratic-var}
  \big[W^{k,\alpha}, W^{l,\beta} \big]_t= 2\, t \delta_{k,-l} \delta_{\alpha, \beta},\quad k,l\in \Z^3_0,\, \alpha, \beta\in \{1,2\} .
  \end{equation}

In the following, for a vector field $X$ in $ H^1(\T^3,\R^3)$, we write $\L_X^\ast$ for the adjoint operator of the Lie derivative $\L_X$: for any $H^1$-vector fields $Y$ and $Z$, $\<\L_X Y, Z\>_{L^2} = -\<Y, \L_X^\ast Z\>_{L^2}$. If $X$ is divergence free, one has $\L_X^\ast Y =X\cdot \nabla Y + (\nabla X)^\ast Y$, where for $i=1,2,3$, $((\nabla X)^\ast Y)_i= Y\cdot \partial_i X$.

\subsection{Global well posedness of \eqref{stoch-MHD-cut}} \label{subsec-well-posedness}

Due to the nonlinear terms in \eqref{stoch-MHD.2}, we can only prove existence of local solutions for general initial data. Therefore, we need a cut-off technique as below. Let $R>0$ be fixed and $f_R\in C_b^1(\R_+, [0,1])$ be a non-increasing function such that $f_R|_{[0,R]} \equiv 1$ and $f_R|_{[R+1,\infty)} \equiv 0$. Consider the stochastic 3D MHD equations with cut-off:
  \begin{equation}\label{stoch-MHD-cut}
  \d \Phi + f_R(\Phi) b(\Phi, \Phi)\,\d t = \big[\Delta \Phi + S_\theta(\Phi) \big]\,\d t + \frac{C_\nu}{\|\theta \|_{\ell^2}} \sum_{k, \alpha} \theta_k \Pi(\sigma_{k,\alpha} \cdot\nabla \Phi)\, \d W^{k,\alpha},
  \end{equation}
where $f_R(\Phi)= f_R\big(\|\Phi \|_{H^{-\delta}} \big)$ for some fixed $\delta\in (0,1/2)$. We remark that, if the $H^{-\delta }$-norm of $\Phi$ does not attain the threshold $R$, then the cut-off function $f_R(\Phi)$ can be dropped and \eqref{stoch-MHD-cut} reduces to \eqref{stoch-MHD.2}. Thanks to the cut-off, we can show that, for any initial data $\Phi_0 \in H$, the above system \eqref{stoch-MHD-cut} admits a pathwise unique global solution, strong in the probabilistic sense and weak in the analytic sense. First of all, we explain what we mean by a solution to \eqref{stoch-MHD-cut}.

\begin{definition}\label{defin}
Given a filtered probability space $(\Omega ,\mathcal{F},(\mathcal{F}_t),\mathbb{P})$ and a family of independent $(\mathcal{F}_t)$-complex Brownian motions $\{W^{k,\alpha}\}_{k\in \Z_0^3, \alpha=1,2}$ defined on $\Omega$, we say that an $(\mathcal{F}_t)$-progressively measurable process $\Phi= (\xi, \eta)^\ast$ is a strong solution to \eqref{stoch-MHD-cut} with initial condition $\Phi_0= (\xi_0, \eta_0)^\ast$ if it has trajectories of class $L^{\infty}(0,T;H) \cap L^{2}( 0,T;V)  $ and in $C\big([0,T],H^{-\delta} \big)$ and, for any divergence free vector field $v\in C^\infty(\mathbb{T}^{3},\mathbb{R}^{3})$, $\mathbb{P}$-a.s. the following identities hold for all $t\in [0,T]$:
  \begin{equation}\label{defin.1}
  \aligned
  \< \xi_{t},v\> =&\, \<\xi_0, v\> + \int_0^t f_R(\Phi_s) \Big[\big\< \xi_s,\L_{u_s}^{\ast}v \big\> - \big\< \eta_s,\L_{B_s}^{\ast}v \big\> \Big] \,\d s\\
  &\, + \int_0^t \< \xi_s,\Delta v + S_\theta(v) \>\,\d s -\frac{C_{\nu}}{\| \theta\|_{\ell^2}} \sum_{k,\alpha}\theta_{k} \int_0^t \<
  \xi_{s}, \sigma_{k,\alpha}\cdot \nabla v \> \,\d W_{s}^{k,\alpha},
  \endaligned
  \end{equation}
  \begin{equation}\label{defin.2}
  \aligned
  \< \eta_t,v\> =&\, \<\eta_0, v\> + \int_0^t f_R(\Phi_s) \Big[\big\< \eta_s,\L_{u_s}^{\ast}v \big\> - \big\< \xi_s,\L_{B_s}^\ast v \big\> -2 \big<T(B,u), v\big> \Big] \,\d s\\
  &\, + \int_0^t \< \eta_s,\Delta v + S_\theta(v) \> \,\d s -\frac{C_{\nu}}{\| \theta\|_{\ell^2}} \sum_{k,\alpha}\theta_{k} \int_0^t \<
  \eta_{s}, \sigma_{k,\alpha}\cdot \nabla v \> \,\d W_{s}^{k,\alpha} .
  \endaligned
  \end{equation}
\end{definition}

Our main result in this part can be stated as follows.

\begin{theorem}\label{thm-existence}
Assume $\Phi_0= (\xi_0, \eta_0)^\ast \in H$, $T>0$ and $\theta\in \ell^2$ verifies the symmetry property \eqref{theta-sym}, then there exists on the interval $[0,T]$ a pathwise unique strong solution $\Phi=(\xi, \eta)^\ast$ to \eqref{stoch-MHD-cut} in the sense of Definition \ref{defin}. Moreover, there is a constant $C_{\|\Phi_0\|_{H}, \delta, R,T}>0$, independent of $\nu>0$ and $\theta\in \ell^2$, such that
  \begin{equation}\label{solution-property}
  \P \mbox{-a.s.}, \quad \|\Phi \|_{L^\infty(0,T; H)} \vee \|\Phi \|_{L^2(0,T; V)} \leq C_{\|\Phi_0\|_H, \delta, R,T}.
  \end{equation}
\end{theorem}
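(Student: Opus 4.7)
The plan is to follow the classical Galerkin approximation plus Yamada--Watanabe programme. For each $N\geq 1$, project \eqref{stoch-MHD-cut} onto the finite-dimensional subspace $H_N = \mathrm{span}\{\sigma_{k,\alpha}: |k|\leq N,\alpha=1,2\}$. Because the cut-off $f_R(\cdot)=f_R(\|\cdot\|_{H^{-\delta}})$ makes the drift globally Lipschitz on $H_N$, the projected SDE admits a pathwise unique global solution $\Phi^N$. I will obtain the weak existence for \eqref{stoch-MHD-cut} from tightness and limit-passage on this approximating sequence, then prove pathwise uniqueness separately so that the Yamada--Watanabe theorem upgrades it to the strong solution.

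The crucial input, from which both \eqref{solution-property} and the tightness below emerge, is a uniform-in-$(\nu,\theta,N)$ energy estimate. Applying It\^o's formula to $\|\Phi^N_t\|_H^2$, the transport structure yields two decisive cancellations: since each $\sigma_{k,\alpha}$ is divergence free one has $\<\sigma_{k,\alpha}\cdot\nabla \Phi^N,\Phi^N\>=0$, so the stochastic integral is a true martingale, and moreover the It\^o quadratic variation of the noise term equals $-2\<S_\theta(\Phi^N),\Phi^N\>$ and exactly cancels the Stratonovich--It\^o corrector in the drift. Hence only the deterministic $-2\|\nabla\Phi^N\|_H^2$ and the cut-off nonlinearity $-2f_R(\Phi^N)\<b(\Phi^N,\Phi^N),\Phi^N\>$ remain. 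The latter is controlled on the set $\{\|\Phi^N\|_{H^{-\delta}}\leq R+1\}$ by combining Gagliardo--Nirenberg in dimension three with the interpolation $\|\Phi^N\|_H \leq \|\Phi^N\|_{H^{-\delta}}^{1/(1+\delta)} \|\Phi^N\|_V^{\delta/(1+\delta)}$. Since the hypothesis $\delta<1/2$ guarantees $6\delta/(1+\delta)<2$, Young's inequality bounds every vortex-stretching and $T(B,u)$-type contribution by $\tfrac14\|\Phi^N\|_V^2 + C(R,\|\Phi_0\|_H)$. Gr\"onwall then yields the desired uniform bound on $\E\big[\sup_{[0,T]}\|\Phi^N\|_H^2 + \int_0^T\|\Phi^N\|_V^2\,\d s\big]$, with a constant independent of $\nu$ and $\theta$.

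Tightness of $\mathrm{Law}(\Phi^N)$ in $L^2(0,T;H)\cap C([0,T]; H^{-\delta})$ follows from this bound and fractional time-regularity estimates, via a standard Aubin--Lions argument (time increments of the noise part are controlled by It\^o isometry using $\|\nabla\Phi^N\|_H$). After Skorokhod representation one passes to the limit in the weak formulations \eqref{defin.1}--\eqref{defin.2}: the linear and diffusive terms pass by weak convergence in $L^2(0,T;V)$; the cut-off nonlinearity $f_R(\Phi^N) b(\Phi^N,\Phi^N)$ is handled because $f_R$ is continuous on $H^{-\delta}$ and the strong $L^2(0,T;H)$ convergence suffices to deal with the bilinear part when tested against a smooth divergence-free $v$; the stochastic integral converges by a now-standard argument.

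Pathwise uniqueness is the most delicate step. For two solutions with the same initial data, set $\Psi=\Phi^1-\Phi^2$; by linearity of the noise in $\Phi$, the same cancellation between the quadratic variation and $S_\theta$ applies in the It\^o formula for $\|\Psi\|_H^2$. The nonlinear difference decomposes as
\[
f_R(\Phi^1)\big[b(\Psi,\Phi^1)+b(\Phi^2,\Psi)\big] + \big[f_R(\Phi^1)-f_R(\Phi^2)\big]b(\Phi^2,\Phi^2),
\]
and after pairing with $\Psi$ the dangerous stretching-type contributions appearing in $\<b(\Phi^i,\Psi),\Psi\>$ and $\<b(\Psi,\Phi^i),\Psi\>$ (including the $T(B,u)$ terms) are estimated, via H\"older and the $L^3 \hookrightarrow H^{1/2}$ interpolation in 3D, by $C\|\Phi^i\|_H\|\Phi^i\|_V\|\Psi\|_H^2 + \tfrac14\|\Psi\|_V^2$; the prefactor lies in $L^1(0,T)$ thanks to the energy bound on $\Phi^i$. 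For the second term one uses $|f_R(\Phi^1)-f_R(\Phi^2)|\leq \|f_R'\|_\infty \|\Psi\|_{H^{-\delta}}\leq C\|\Psi\|_H$ together with the cut-off-controlled bound on $b(\Phi^2,\Phi^2)$ tested against $\Psi$. Gr\"onwall then yields $\Psi\equiv 0$, and Yamada--Watanabe delivers the strong solution, with \eqref{solution-property} inherited from the Galerkin estimate. The main obstacle throughout is the 3D supercritical vortex-stretching, which is tamed in the a priori bound exactly by the $H^{-\delta}$ cut-off with $\delta<1/2$ and in the uniqueness argument by the $L^2(0,T;V)$ integrability that same bound provides.
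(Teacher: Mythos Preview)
Your proposal is correct and takes essentially the same route as the paper: Galerkin approximation, a uniform energy estimate exploiting the divergence-free cancellation $\<\sigma_{k,\alpha}\cdot\nabla\Phi,\Phi\>=0$ together with the $H^{-\delta}$ cut-off (the paper interpolates $H^{1/2}$ directly between $H^{-\delta}$ and $H^1$ rather than passing through $L^2$, but the two bookkeepings are equivalent and both close precisely when $\delta<1/2$), then tightness, Skorokhod representation, and pathwise uniqueness via a Gr\"onwall argument on the difference. One refinement worth noting: because the martingale integrand is identically zero (not merely integrable), the energy differential $\d\|\Phi^N\|_H^2$ is pathwise deterministic, so Gr\"onwall delivers the $\P$-a.s.\ bound \eqref{solution-property} directly rather than only the expectation bound you wrote.
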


To show the existence of weak solutions to the above equations, we first prove an a priori estimate on the solutions. In the following, we make frequent use of the Sobolev embedding inequality
  \begin{equation}\label{Sobolev-embedding}
  \|\varphi \|_{L^q} \leq c \|\varphi \|_{H^s} \quad \mbox{with} \quad \frac1q = \frac12 - \frac s3,\ s<\frac32.
  \end{equation}
We also need the interpolation inequality: for any $s_0 <s < s_1$,
  $$\|\varphi \|_{H^s} \leq \|\varphi \|_{H^{s_0}}^{(s_1-s)/(s_1-s_0)} \|\varphi \|_{H^{s_1}}^{(s-s_0)/(s_1-s_0)}. $$
Moreover, we write $C$ (without subscript) for generic positive constants independent of the key parameters such as $\delta$ and $R$.

\begin{lemma}\label{lem-apriori}
Let $\Phi_0\in H$. Then, there exists $C_{\delta, R}>0$ such that, $\P$-a.s.,
  $$\|\Phi_t\|_{L^2}^2 + \int_0^t \|\Phi_s\|_{H^1}^2\,\d s \leq \|\Phi_0 \|_{L^2}^2 + C_{\delta, R} T \quad \mbox{for all } t\in [0,T].$$
\end{lemma}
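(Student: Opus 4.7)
The plan is to apply It\^o's formula to $\|\Phi_t\|_{L^2}^2$ and exploit that transport-type noise with divergence-free coefficients preserves the $L^2$ energy pathwise, so that the entire stochastic contribution drops out; the resulting deterministic-looking energy inequality is then closed using the cut-off $f_R$. The computation is carried out on the Galerkin approximations, where every integration by parts below is legitimate, and the bound is passed to the limit.

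First, I would verify the pathwise noise cancellation. The stochastic integrand multiplying each $\d W^{k,\alpha}$ in the It\^o expansion of $\d\|\Phi_t\|_{L^2}^2$ is
$$2\langle \Phi, \Pi(\sigma_{k,\alpha}\cdot\nabla\Phi)\rangle = 2\langle \Phi, \sigma_{k,\alpha}\cdot\nabla\Phi\rangle = \int_{\T^3}\sigma_{k,\alpha}\cdot\nabla|\Phi|^2\,\d x = 0,$$
using that both $\Phi$ and $\sigma_{k,\alpha}$ are divergence free, so $\Pi$ can be dropped. Hence the martingale part of $\|\Phi_t\|_{L^2}^2$ vanishes identically. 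A short computation then shows that $2\langle \Phi, S_\theta(\Phi)\rangle$ cancels the It\^o correction arising from $\d[\Phi,\Phi]_t$: integrating by parts once in the definition of $S_\theta$ and using that $\Pi(\sigma_{-k,\alpha}\cdot\nabla\Phi)$ is divergence free, both terms reduce, up to sign, to $\sum_{k,\alpha}\frac{2 C_\nu^2\theta_k^2}{\|\theta\|_{\ell^2}^2}\langle \Pi(\sigma_{k,\alpha}\cdot\nabla\Phi), \Pi(\sigma_{-k,\alpha}\cdot\nabla\Phi)\rangle$. This is the standard Stratonovich energy conservation for transport noise. Combined with $2\langle \Phi, \Delta\Phi\rangle = -2\|\nabla\Phi\|_{L^2}^2$, we arrive at the pathwise identity
$$\|\Phi_t\|_{L^2}^2 + 2\int_0^t \|\nabla\Phi_s\|_{L^2}^2\,\d s = \|\Phi_0\|_{L^2}^2 - 2\int_0^t f_R(\Phi_s)\langle b(\Phi_s,\Phi_s),\Phi_s\rangle\,\d s.$$

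Next I would estimate the trilinear form. Expanding $\langle b(\Phi,\Phi),\Phi\rangle = \langle b_1,\xi\rangle + \langle b_2,\eta\rangle$ and using $\nabla\cdot u = \nabla\cdot B = 0$, the pure transport terms $\langle u\cdot\nabla\xi,\xi\rangle$ and $\langle u\cdot\nabla\eta,\eta\rangle$ vanish, and the cross $B$-terms telescope as $\langle B\cdot\nabla\eta,\xi\rangle + \langle B\cdot\nabla\xi,\eta\rangle = \int_{\T^3} B\cdot\nabla(\xi\cdot\eta)\,\d x = 0$. What survives consists of vortex-stretching terms of the form $\int v\cdot\nabla w\cdot z\,\d x$ and the $T(B,u)$ contribution, controlled pointwise by $|\nabla B||\nabla u||\eta|$. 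For each, H\"older together with the Sobolev embedding $H^{1/2}(\T^3)\hookrightarrow L^3(\T^3)$ and the Biot-Savart bound $\|u\|_{H^{s+1}}+\|B\|_{H^{s+1}}\lesssim \|\xi\|_{H^s}+\|\eta\|_{H^s}$ yields
$$|\langle b(\Phi,\Phi),\Phi\rangle| \leq C\|\Phi\|_{H^{1/2}}^3.$$

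Finally I would exploit the cut-off. On $\{f_R(\Phi)\neq 0\}$ we have $\|\Phi\|_{H^{-\delta}}\leq R+1$, so interpolating $\|\Phi\|_{H^{1/2}}\leq \|\Phi\|_{H^{-\delta}}^{1-\alpha}\|\Phi\|_{H^1}^{\alpha}$ with $\alpha = (1/2+\delta)/(1+\delta)$ gives
$$f_R(\Phi)\,|\langle b(\Phi,\Phi),\Phi\rangle|\leq C(R+1)^{3(1-\alpha)}\|\Phi\|_{H^1}^{3\alpha}.$$
The key point is that $\delta<1/2$ forces $3\alpha<2$; Young's inequality together with Poincar\'e then absorbs this into $\tfrac12\|\nabla\Phi\|_{L^2}^2 + C_{\delta,R}$, and substituting into the identity above and integrating in $t$ delivers the stated bound (after using Poincar\'e to upgrade $\int_0^t\|\nabla\Phi_s\|_{L^2}^2\,\d s$ to $\int_0^t\|\Phi_s\|_{H^1}^2\,\d s$). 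The principal obstacle is the pathwise noise cancellation in the first step --- one must carefully handle the Leray projections nested inside $S_\theta(\Phi)$ and repeatedly use that every field in sight is divergence free; once this is accepted, the rest is a standard vortex-stretching estimate, and the assumption $\delta<1/2$ is precisely what makes Young's inequality strictly subquadratic in $\|\Phi\|_{H^1}$.
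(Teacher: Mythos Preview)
Your proof is correct and follows essentially the same route as the paper: derive the pathwise energy identity (the paper works directly in Stratonovich form rather than verifying the It\^o--Stratonovich cancellation by hand, but this is equivalent), then control the trilinear term via $H^{1/2}\hookrightarrow L^3$, interpolate between $H^{-\delta}$ and $H^1$, and close using the cut-off together with Young's inequality, which is exactly where $\delta<1/2$ enters. Your version is slightly more streamlined in that you exploit the extra cancellation $\langle B\cdot\nabla\eta,\xi\rangle+\langle B\cdot\nabla\xi,\eta\rangle=0$ to package everything into a single bound $|\langle b(\Phi,\Phi),\Phi\rangle|\leq C\|\Phi\|_{H^{1/2}}^3$, whereas the paper estimates each piece of the Lie derivatives separately; the structure and the final exponents are identical.
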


\begin{proof}
We omit the time variable to save notations. In Stratonvich form, \eqref{stoch-MHD-cut} reads as (cf. \eqref{stoch-MHD.1})
  $$\d \Phi + f_R(\Phi) b(\Phi, \Phi)\,\d t = \Delta \Phi\,\d t + \frac{C_\nu}{\|\theta \|_{\ell^2}} \sum_{k, \alpha} \theta_k \Pi(\sigma_{k, \alpha} \cdot\nabla \Phi) \circ \d W^{k, \alpha}.$$
By the Stratonovich calculus,
  $$\d\|\Phi\|_{L^2}^2 = 2\big\<\Phi, -f_R(\Phi) b(\Phi, \Phi)+ \Delta \Phi \big\> \,\d t + \frac{2 C_\nu}{\|\theta \|_{\ell^2}} \sum_{k, \alpha} \theta_k \<\Phi, \Pi(\sigma_{k, \alpha} \cdot\nabla \Phi) \> \circ \d W^{k, \alpha}. $$
Note that the two components of $\Phi=(\xi, \eta)^\ast$ are divergence free; by the integration by parts formula,
  $$\<\Phi, \Pi(\sigma_{k,\alpha} \cdot\nabla \Phi)\> = \<\xi, \Pi(\sigma_{k,\alpha} \cdot\nabla \xi)\> + \<\eta, \Pi(\sigma_{k,\alpha} \cdot\nabla \eta)\> = \<\xi, \sigma_{k,\alpha} \cdot\nabla \xi\> + \<\eta, \sigma_{k,\alpha} \cdot\nabla \eta\> =0, $$
since $\sigma_{k, \alpha}$ is also divergence free. Therefore, the above equation reduces to
  \begin{equation}\label{lem-apriori.1}
  \d\|\Phi \|_{L^2}^2 = -2f_R(\Phi) \<\Phi, b(\Phi, \Phi)\> \,\d t -2\|\nabla \Phi\|_{L^2}^2\,\d t.
  \end{equation}
Recalling the expression of $b(\Phi, \Phi)$, we have
  $$\<\Phi, b(\Phi, \Phi)\> = \<\xi, b_1(\Phi, \Phi)\> + \<\eta, b_2(\Phi, \Phi)\>. $$
We estimate the two quantities separately.

\emph{Step 1.} We have
  \begin{equation}\label{lem-apriori.2}
  \<\xi, b_1(\Phi, \Phi)\> = \<\xi, \L_u\xi \> - \<\xi, \L_B \eta\> =: I_1 + I_2.
  \end{equation}
As $u$ is divergence free, we have $I_1= -\<\xi, \xi\cdot\nabla u \>_{L^2}$ and thus, by H\"older's inequality,
  $$|I_1| \leq \|\xi\|_{L^3}^2 \|\nabla u\|_{L^3} \leq C \|\xi \|_{L^3}^3 \leq C \|\xi \|_{H^{1/2}}^3 .$$
Using the interpolation inequality with $s_0=-\delta$, $s_1= 1$ and $s=1/2$, we obtain
  \begin{equation}\label{lem-apriori.3}
  |I_1| \leq C \|\xi \|_{H^{-\delta}}^{3/2(1+\delta)} \|\xi \|_{H^1}^{3(1+2\delta)/2(1+\delta)} \leq \eps \|\xi \|_{H^1}^2 + C_{\delta, \eps} \|\xi \|_{H^{-\delta}}^{6/(1-2\delta)},
  \end{equation}
where in the last step we have used the inequality $ab\leq \frac{a^p}p + \frac{b^q}q $ for $a,b\geq 0$ and $p= 4(1+\delta)/(6\delta+3),\, q= 4(1+\delta)/(1- 2\delta) $; $\eps>0$ is some fixed constant to be determined later.

Next, we deal with $I_2$:
  $$I_2= - \<\xi, B\cdot\nabla \eta\> + \<\xi,\eta\cdot\nabla B\> = : I_{2,1} +I_{2,2}.$$
By the H\"older inequality with exponents $\frac12 + \frac13 + \frac16 =1$,
  $$|I_{2,1}| \leq \|\nabla \eta\|_{L^2} \|\xi\|_{L^3} \|B\|_{L^6} \leq C \|\eta\|_{H^1} \|\xi\|_{H^{1/2}} \|B\|_{H^1} \leq C \|\eta\|_{H^1} \|\xi\|_{H^{1/2}} \|\eta\|_{L^2}. $$
Using the interpolation inequality with $s=1/2$ and $s=0$, while $s_0=-\delta$ and $s_1= 1$ are fixed, we have
  $$\aligned
  |I_{2,1}| &\leq C \|\eta\|_{H^1} \|\xi \|_{H^{-\delta}}^{1/2(1+\delta)} \|\xi \|_{H^1}^{(1+2\delta)/2(1+\delta)} \|\eta\|_{H^{-\delta}}^{1/(1+\delta)} \|\eta\|_{H^1}^{\delta/(1+\delta)} \\
  &= C\|\xi \|_{H^{-\delta}}^{1/2(1+\delta)} \|\xi \|_{H^1}^{(1+2\delta)/2(1+\delta)} \|\eta\|_{H^{-\delta}}^{1/(1+\delta)} \|\eta\|_{H^1}^{(1+2\delta)/(1+\delta)} .
  \endaligned $$
Now applying the inequality $abcd\leq \frac{a^p}p + \frac{b^q}q + \frac{c^r}r + \frac{d^s}s \ (a,b,c,d\geq 0)$ with
  $$p=\frac{12(1+\delta)}{1-2\delta}, \quad q=\frac{4(1+\delta)}{1+2\delta},\quad r=\frac{6(1+\delta)}{1-2\delta}, \quad s=\frac{2(1+\delta)}{1+2\delta}, $$
we obtain
  \begin{equation}\label{lem-apriori.4}
  |I_{2,1}| \leq  \eps \|\xi \|_{H^1}^2 + \eps \|\eta\|_{H^1}^2 + C_{\delta,\eps} \Big( \|\xi \|_{H^{-\delta}}^{6/(1-2\delta)} + \|\eta \|_{H^{-\delta}}^{6/(1-2\delta)} \Big).
  \end{equation}
In the same way,
  $$\aligned
  |I_{2,2}| &\leq \|\xi \|_{L^3} \|\eta \|_{L^3} \|\nabla B\|_{L^3} \leq C \|\xi \|_{L^3} \|\eta \|_{L^3}^2 \leq C \|\xi \|_{H^{1/2}} \|\eta \|_{H^{1/2}}^2 \\
  &\leq C \|\xi \|_{H^{-\delta}}^{1/2(1+\delta)} \|\xi \|_{H^1}^{(1+2\delta)/2(1+\delta)} \|\eta \|_{H^{-\delta}}^{1/(1+\delta)} \|\eta \|_{H^1}^{(1+2\delta)/(1+\delta)}.
  \endaligned $$
The right hand side can be estimated similarly as for $|I_{2,1}|$, thus we have
  $$ |I_2| \leq |I_{2,1}| + |I_{2,2}| \leq 2\eps \big(\|\xi \|_{H^1}^2 + \|\eta\|_{H^1}^2 \big) + C_{\delta,\eps} \Big( \|\xi \|_{H^{-\delta}}^{6/(1-2\delta)} + \|\eta \|_{H^{-\delta}}^{6/(1-2\delta)} \Big). $$
Combining this result with \eqref{lem-apriori.2} and \eqref{lem-apriori.3} leads to
  \begin{equation}\label{lem-apriori.5}
  |\<\xi, b_1(\Phi, \Phi)\> | \leq 3\eps \big(\|\xi \|_{H^1}^2 + \|\eta\|_{H^1}^2 \big) + C_{\delta,\eps} \Big( \|\xi \|_{H^{-\delta}}^{6/(1-2\delta)} + \|\eta \|_{H^{-\delta}}^{6/(1-2\delta)} \Big).
  \end{equation}

\emph{Step 2}. Now we turn to estimate
  $$\<\eta, b_2(\Phi, \Phi)\> = \<\eta, \L_u \eta\> - \<\eta, \L_B\xi\> - 2\<\eta, T(B,u)\> =: J_1 + J_2 + J_3. $$
The arguments are similar to those in \emph{Step 1}. We have $J_1= - \<\eta, \eta\cdot \nabla u\>_{L^2}$, thus
  $$|J_1| \leq \|\eta \|_{L^3}^2 \|\nabla u \|_{L^3} \leq C \|\eta \|_{L^3}^2 \|\xi \|_{L^3} \leq C \|\xi \|_{H^{1/2}} \|\eta \|_{H^{1/2}}^2.  $$
Repeating the estimate for $|I_{2,2}|$ yields
  \begin{equation*}
  |J_1| \leq \eps \big(\|\xi \|_{H^1}^2 + \|\eta\|_{H^1}^2 \big) + C_{\delta,\eps} \Big( \|\xi \|_{H^{-\delta}}^{6/(1-2\delta)} + \|\eta \|_{H^{-\delta}}^{6/(1-2\delta)} \Big).
  \end{equation*}
Next, the term $J_2$ can be treated in the same way as $I_2$ and we have
  \begin{equation*}
  |J_2|\leq 2\eps \big(\|\xi \|_{H^1}^2 + \|\eta\|_{H^1}^2 \big) + C_{\delta,\eps} \Big( \|\xi \|_{H^{-\delta}}^{6/(1-2\delta)} + \|\eta \|_{H^{-\delta}}^{6/(1-2\delta)} \Big).
  \end{equation*}
Finally, the definition of $T(B,u)$ implies
  $$|J_3| \leq C\|\eta \|_{L^3} \|\nabla B\|_{L^3} \|\nabla u\|_{L^3}\leq C \|\eta \|_{L^3}^2 \|\xi \|_{L^3} \leq C \|\eta \|_{H^{1/2}}^2 \|\xi \|_{H^{1/2}},$$
hence, this last quantity can also be treated as $|I_{2,2}|$. Summarizing these estimates, we obtain
  $$|\<\eta, b_2(\Phi, \Phi)\> | \leq 5\eps \big(\|\xi \|_{H^1}^2 + \|\eta\|_{H^1}^2 \big) + C_{\delta,\eps} \Big( \|\xi \|_{H^{-\delta}}^{6/(1-2\delta)} + \|\eta \|_{H^{-\delta}}^{6/(1-2\delta)} \Big).$$

\emph{Step 3}. Thanks to the estimates in \emph{Steps 1 and 2}, we deduce from \eqref{lem-apriori.1} that
  $$\aligned
  \d\|\Phi \|_{L^2}^2 &\leq 2f_R(\Phi) \Big[8\eps \big(\|\xi \|_{H^1}^2 + \|\eta\|_{H^1}^2 \big) + C_{\delta,\eps} \Big( \|\xi \|_{H^{-\delta}}^{6/(1-2\delta)} + \|\eta \|_{H^{-\delta}}^{6/(1-2\delta)} \Big)\Big]\,\d t -2\|\nabla \Phi\|_{L^2}^2\,\d t \\
  &\leq 16\eps \big(\|\xi \|_{H^1}^2 + \|\eta\|_{H^1}^2 \big) \,\d t + C_{\delta,\eps} (R+1)^{6/(1-2\delta)} \,\d t -2\|\nabla \Phi\|_{L^2}^2\,\d t,
  \endaligned $$
where we have used $0\leq f_R \leq 1$ and $f_R|_{[R+1, \infty)} \equiv 0$. The Poincar\'e inequality implies, for some constant $C_0>0$,
  $$\|\nabla \Phi\|_{L^2}^2 \geq C_0 \|\Phi\|_{H^1}^2= C_0 \big(\|\xi \|_{H^1}^2 + \|\eta\|_{H^1}^2 \big). $$
Therefore, taking $\eps= C_0/16$ leads to
  $$\d\|\Phi \|_{L^2}^2 + C_0\| \Phi\|_{H^1}^2\,\d t \leq C_{\delta,\eps} (R+1)^{6/(1-2\delta)} \,\d t.$$
This gives us the desired estimate.
\end{proof}

With the above a priori estimate in hand, it is standard to apply the Galerkin approximation to show the existence of weak solutions to \eqref{stoch-MHD-cut}, see for instance \cite[Section 3]{FlaLuo21}. Here we sketch the main steps. Let $H_N$ be the finite dimensional subspace of $H$ spanned by the fields $\{\sigma_{k,\alpha}: |k|\leq N, \alpha=1,2\}$. Denote by $\Pi_N : H \to H_N$ be the orthogonal projection, and define
  $$b_N(\phi_N)= \Pi_N b(\phi_N, \phi_N), \quad G_N^{k,\alpha}(\phi_N)= \Pi_N(\sigma_{k,\alpha} \cdot\nabla \phi_N), \quad  \phi_N\in H_N. $$
Consider the finite dimensional SDE on $H_N$:
  \begin{equation}\label{approx-SDE}
  \d \phi_N(t) = \big[-b_N(\phi_N(t)) + \Delta\phi_N(t) \big]\,\d t + \frac{C_\nu}{\|\theta \|_{\ell^2}} \sum_{k,\alpha} \theta_k G_N^{k,\alpha}(\phi_N(t)) \circ \d W^{k,\alpha}_t, \quad \phi_N(0) = \Pi_N \Phi_0.
  \end{equation}
The a priori estimate in Lemma \ref{lem-apriori} tells us that, for any $N\geq 1$, $\P$-a.s. for all $t\leq T$,
  \begin{equation}\label{uniform-bounds}
  \|\phi_N(t)\|_{L^2} + \int_0^t \|\phi_N(s)\|_{H^1}^2\,\d s \leq \|\phi_N(0)\|_{L^2} + C_{\delta, R, T} \leq \|\Phi_0 \|_{L^2} + C_{\delta, R, T}.
  \end{equation}
Therefore, there exists a subsequence $\{\phi_{N_i} \}_{i\geq 1}$ converging weakly-$\ast$ in $L^\infty\big(\Omega, L^\infty(0,T; L^2)\big)$ and weakly in $L^2\big(\Omega, L^2(0,T; H^1)\big)$. To show the weak existence of solutions to \eqref{stoch-MHD-cut}, we shall use the classical compactness argument as in \cite{FlaGat95}; here, we follow more closely \cite[Section 3]{FlaLuo21}.

Let $Q_N$ be the law of the process $\phi_N(\cdot),\, N\geq 1$. By \eqref{uniform-bounds} and using the equations \eqref{approx-SDE}, it is not difficult to show (see e.g. \cite[Corollary 3.5]{FlaLuo21}) that there exists $C>0$ such that
  \begin{equation}\label{expectations.1}
  \sup_{N\geq 1} \bigg[\E \int_0^T \|\phi_N(t)\|_{H^1}^2 \,\d t + \E\int_0^T\int_0^T \frac{\|\phi_N(t) -\phi_N(s)\|_{H^{-6}}^2}{|t-s|^{1+2\gamma}} \,\d t\d s \bigg] \leq C,
  \end{equation}
where $\gamma\in (0,1/2)$ is fixed; moreover, for any $p$ big enough, there is $C_p>0$ such that
  \begin{equation}\label{expectations.2}
  \sup_{N\geq 1} \bigg[\E \int_0^T \|\phi_N(t)\|_{L^2}^p \,\d t + \E\int_0^T\int_0^T \frac{\|\phi_N(t) -\phi_N(s)\|_{H^{-6}}^4}{|t-s|^{7/3}} \,\d t\d s \bigg] \leq C_p.
  \end{equation}
Recall the compact embeddings (see \cite{Simon})
  \begin{equation}\label{Simon-embedding}
  \aligned
  L^2(0,T; V) \cap W^{\gamma,2}\big(0,T; H^{-6} \big) &\subset L^2(0,T; H),\\
  L^p(0,T; H) \cap W^{1/3,4}\big(0,T; H^{-6} \big) &\subset C\big([0,T], H^{-\delta} \big),
  \endaligned
  \end{equation}
where $W^{\alpha, p}\big(0,T; H^{-6} \big)$, for some $\alpha\in (0,1)$ and $ p>1$, is the time fractional Sobolev space.  The above uniform bounds imply that the family $\{Q_N\}_{N\geq 1}$ is tight in
  $$L^2(0,T; H) \quad\mbox{and} \quad C\big([0,T], H^{-\delta} \big). $$
Now, the Prohorov theorem (see \cite[p.59, Theorem 5.1]{Billingsley}) implies that there exists a subsequence $\{Q_{N_i} \}_{i\geq 1}$ which converges weakly to some probability measure $Q$ supported on $L^2(0,T; H) \cap C\big([0,T], H^{-\delta} \big)$. Moreover, by the Skorohod representation theorem (see \cite[p.70, Theorem 6.7]{Billingsley}), there exists a new probability space $\big(\tilde\Omega, \tilde{\mathcal F}, \tilde\P\big)$ and a sequence of stochastic processes $\big\{\tilde \phi_{N_i} \big\}_{i\geq 1}$ and $\tilde\phi$ defined on $\Omega$, such that
\begin{itemize}
\item[(i)] $\tilde \phi_{N_i}$ has the law $Q_{N_i}$ for all $i\geq 1$, and $\tilde\phi$ has the law $Q$;
\item[(ii)] $\tilde\P$-a.s., $\tilde \phi_{N_i}$ converges as $i\to \infty$ to $\tilde\phi$, in the topology of $L^2(0,T; H) \cap C\big([0,T], H^{-\delta} \big)$.
\end{itemize}
Since $\tilde \phi_{N_i}$ has the same law $Q_{N_i}$ as $\phi_{N_i}$ and the latter enjoys the pathwise estimate \eqref{uniform-bounds}, one can deduce that the limit $\tilde\phi$ satisfies also
  \begin{equation}\label{limit-bounds}
  \big\|\tilde\phi \big\|_{L^\infty(0,T; H)} \vee \big\|\tilde\phi \big\|_{L^2(0,T; V)} \leq C_{\|\Phi_0 \|_{H}, \delta, R, T}.
  \end{equation}
Having these preparations in mind and writing \eqref{approx-SDE} in the weak form, one can pass to the limit in the nonlinear terms and prove that $\tilde\phi$ solves \eqref{stoch-MHD-cut} in the weak sense. The details are omitted here; we only give a sketched proof of the fact that \eqref{stoch-MHD-cut} enjoys the pathwise uniqueness among those solutions satisfying the bounds \eqref{solution-property}.

\begin{proof}[Proof of pathwise uniqueness of \eqref{stoch-MHD-cut}]
First, we remark that, thanks to the bounds \eqref{solution-property}, it is enough to require that $v\in H^1(\T^3,\R^3)$ in the equations \eqref{defin.1} and \eqref{defin.2}; indeed, we have
  $$\int_0^t \< \xi_s,\Delta v + S_\theta(v) \> \,\d s= -\int_0^t \< \nabla\xi_s,\nabla v \> \,\d s - \frac{C_\nu^2}{\|\theta \|_{\ell^2}^2} \sum_{k,\alpha} \theta_k^2 \int_0^t \big\< \sigma_{k,\alpha}\cdot\nabla \xi_s, \Pi(\sigma_{-k, \alpha} \cdot\nabla v) \big\> \,\d s $$
and similarly for the corresponding term in \eqref{defin.2}. Here we write $\<\cdot,\cdot\>$ for the duality between $H^1(\T^3)$ and $H^{-1}(\T^3)$, which are used to denote $\R^d$-valued functions and distributions with $d=3$ or $6$.  Moreover, one can show that the assumptions of \cite[p.72, Theorem 2.13]{RozLot} are verified, thus we can apply the It\^o formula \cite[(2.5.3)]{RozLot}. In the following we omit the time variable to simplify notations.

Let $\Phi_i= (\xi_i, \eta_i)^\ast,\, i=1,2$ be two solutions to \eqref{stoch-MHD-cut} on the same filtered probability space $(\Omega, \mathcal F, (\mathcal F_t), \P)$, with the same initial condition $\Phi_0=(\xi_0, \eta_0)^\ast$ and the same family of Brownian motions $\{W^{k,\alpha} \}_{k\in \Z^3_0, \alpha=1,2}$, satisfying
  \begin{equation}\label{proof-uniq-bounds}
  \P \mbox{-a.s.}, \quad \|\Phi_i \|_{L^\infty(0,T; H)} \vee \|\Phi_i \|_{L^2(0,T; V)} \leq C_{\|\Phi_0 \|_{H}, \delta, R, T}, \quad i=1,2. \end{equation}
Then, it holds in the distribution sense that, for $i=1,2$,
  $$\d\Phi_i= -f_R(\Phi_i) b(\Phi_i, \Phi_i)\,\d t + \big[\Delta\Phi_i + S_\theta(\Phi_i) \big]\,\d t + \frac{C_\nu}{\|\theta \|_{\ell^2}} \sum_{k, \alpha} \theta_k \Pi(\sigma_{k,\alpha} \cdot\nabla \Phi_i)\, \d W^{k,\alpha}. $$
Let $\Phi = (\xi, \eta)^\ast := \Phi_1 -\Phi_2 = (\xi_1 -\xi_2, \eta_1 -\eta_2)^\ast$; then,
  $$\aligned
  \d \Phi=&\, - \big[f_R(\Phi_1) b(\Phi_1, \Phi_1) -f_R(\Phi_2) b(\Phi_2, \Phi_2)\big]\,\d t + \big[\Delta\Phi + S_\theta(\Phi) \big]\,\d t \\
  &\, + \frac{C_\nu}{\|\theta \|_{\ell^2}} \sum_{k, \alpha} \theta_k \Pi(\sigma_{k,\alpha} \cdot\nabla \Phi)\, \d W^{k,\alpha}.
  \endaligned $$
By the It\^o formula (see \cite[(2.5.3)]{RozLot}),
  $$\aligned
  \d\|\Phi \|_{L^2}^2 =&\, -2\big\<\Phi, f_R(\Phi_1) b(\Phi_1, \Phi_1) -f_R(\Phi_2) b(\Phi_2, \Phi_2) \big>\,\d t + 2\<\Phi, \Delta\Phi + S_\theta(\Phi) \> \,\d t \\
  &\, + \frac{2 C_\nu}{\|\theta \|_{\ell^2}} \sum_{k, \alpha} \theta_k \<\Phi, \Pi(\sigma_{k,\alpha} \cdot\nabla \Phi)\>\, \d W^{k,\alpha} + \frac{2C_\nu^2}{\|\theta \|_{\ell^2}^2} \sum_{k, \alpha} \theta_k^2 \|\Pi(\sigma_{k,\alpha} \cdot\nabla \Phi) \|_{L^2}^2 \,\d t .
  \endaligned $$
The definition of $S_\theta(\Phi)$ leads to
  $$\<\Phi, \Delta\Phi + S_\theta(\Phi) \> = - \|\nabla \Phi\|_{L^2}^2 - \frac{C_\nu^2}{\|\theta \|_{\ell^2}^2} \sum_{k, \alpha} \theta_k^2 \|\Pi(\sigma_{k,\alpha} \cdot\nabla \Phi) \|_{L^2}^2. $$
Moreover, since $\sigma_{k,\alpha}$ and the components $\xi,\eta$ of $\Phi$ are all divergence free, we have
  $$\<\Phi, \Pi(\sigma_{k, \alpha} \cdot\nabla \Phi) \> = \<\Phi, \sigma_{k, \alpha} \cdot\nabla \Phi \> =0 . $$
Consequently,
  \begin{equation}\label{proof-unique.1}
  \d\|\Phi \|_{L^2}^2 = -2\big\<\Phi, f_R(\Phi_1) b(\Phi_1, \Phi_1) -f_R(\Phi_2) b(\Phi_2, \Phi_2) \big\>\,\d t - 2\|\nabla \Phi\|_{L^2}^2 \,\d t.
  \end{equation}

It remains to estimate the first term on the right hand side of \eqref{proof-unique.1}. We have
  \begin{equation}\label{proof-unique.1.5}
  \aligned
  &\quad\ \big| \big\<\Phi, f_R(\Phi_1) b(\Phi_1, \Phi_1) -f_R(\Phi_2) b(\Phi_2, \Phi_2) \big\> \big| \\
  &\leq \big| (f_R(\Phi_1) -f_R(\Phi_2)) \<\Phi, b(\Phi_1, \Phi_1) \> \big| + f_R(\Phi_2) \big| \big\<\Phi, b(\Phi_1, \Phi_1) -b(\Phi_2, \Phi_2) \big\> \big| \\
  &=:  J_1 + J_2.
  \endaligned
  \end{equation}
First, it is clear that
  \begin{equation}\label{proof-unique.2}
  \aligned
  |f_R(\Phi_1) -f_R(\Phi_2)| &\leq \|f'_R \|_\infty \big| \|\Phi_1 \|_{H^{-\delta}} - \|\Phi_2 \|_{H^{-\delta}} \big|\leq C \|\Phi_1 -\Phi_2 \|_{H^{-\delta}} \leq C \|\Phi \|_{L^2},
  \endaligned
  \end{equation}
and, by definition,
  $$\<\Phi, b(\Phi_1, \Phi_1) \>= \<\xi, \L_{u_1} \xi_1 - \L_{B_1}\eta_1\> + \<\eta, \L_{u_1} \eta_1 - \L_{B_1}\xi_1 -2T(B_1, u_1)\> ,$$
thus
  \begin{equation}\label{proof-unique.3}
  J_1\leq C \|\Phi \|_{L^2} \Big( |\<\xi, \L_{u_1} \xi_1 \>| + |\<\xi, \L_{B_1}\eta_1\>| +|\<\eta, \L_{u_1} \eta_1 \>| + |\<\eta, \L_{B_1}\xi_1\>|  + |\<\eta, T(B_1, u_1) \>| \Big).
  \end{equation}

We demonstrate how to estimate the first term on the right hand side and the others can be treated in a similar way. We have
  $$J_{1,1}:= C \|\Phi \|_{L^2} |\<\xi, \L_{u_1} \xi_1 \>| \leq C \|\Phi \|_{L^2} |\<\xi, u_1\cdot\nabla \xi_1 \>| + C \|\Phi \|_{L^2} |\<\xi, \xi_1 \cdot\nabla u_1\>| =: J_{1,1,1} + J_{1,1,2}. $$
By H\"older's inequality with exponent $\frac13 +\frac16 + \frac12 =1$, we have
  \begin{equation*}
  J_{1,1,1} \leq C \|\Phi \|_{L^2} \|\xi \|_{L^3} \|u_1\|_{L^6} \|\nabla\xi_1 \|_{L^2} \leq C \|\Phi \|_{L^2} \|\xi \|_{H^{1/2}} \|u_1\|_{H^1} \|\nabla\xi_1 \|_{L^2} ,
  \end{equation*}
where we have used the Sobolev embedding inequalities \eqref{Sobolev-embedding}. Moreover, applying the interpolation inequality and the Poincar\'e inequality,
  \begin{equation}\label{pathwise-uniq-3}
  J_{1,1,1} \leq C\|\Phi \|_{L^2} \|\xi \|_{L^2}^{1/2} \|\xi \|_{H^1}^{1/2} \|\xi_1 \|_{L^2} \|\nabla\xi_1 \|_{L^2} \leq C \|\Phi \|_{L^2}^{3/2} \|\nabla \xi \|_{L^2}^{1/2} \|\xi_1 \|_{L^2} \|\nabla\xi_1 \|_{L^2}.
  \end{equation}
By Young's inequality with exponent $\frac14 + \frac34 =1$, for $\eps>0$ small enough,
  $$J_{1,1,1} \leq \eps \|\nabla \xi \|_{L^2}^2 + C_\eps \|\Phi \|_{L^2}^{2} \|\xi_1 \|_{L^2}^{4/3} \|\nabla\xi_1 \|_{L^2}^{4/3} \leq \eps \|\nabla \xi \|_{L^2}^2 + C_\eps \|\Phi \|_{L^2}^{2} \|\nabla\xi_1 \|_{L^2}^{4/3}, $$
since, by \eqref{proof-uniq-bounds}, $\xi_1$ is a.s. bounded in $ L^\infty(0,T; L^2)$. Next we turn to estimate $J_{1,1,2}$. By H\"older's inequality with exponent $\frac13 + \frac13 +\frac13 =1$,
  $$\aligned
  J_{1,1,2} &\leq C \|\Phi \|_{L^2} \|\xi \|_{L^3} \|\xi_1 \|_{L^3} \|\nabla u_1\|_{L^3} \leq C \|\Phi \|_{L^2} \|\xi \|_{H^{1/2}} \|\xi_1 \|_{H^{1/2}}^2 \\
  &\leq C \|\Phi \|_{L^2} \|\xi \|_{L^2}^{1/2} \|\xi \|_{H^1}^{1/2} \|\xi_1 \|_{L^2} \|\xi_1 \|_{H^1} \leq C \|\Phi \|_{L^2}^{3/2} \|\nabla \xi \|_{L^2}^{1/2} \|\xi_1 \|_{L^2} \|\nabla\xi_1 \|_{L^2} ,
  \endaligned $$
which is the same as the right hand side of \eqref{pathwise-uniq-3}. Thus, similarly as above, we have
  $$J_{1,1,2} \leq \eps \|\nabla \xi \|_{L^2}^2 + C_\eps \|\Phi \|_{L^2}^{2} \|\nabla\xi_1 \|_{L^2}^{4/3}. $$
Summarizing the above arguments, we obtain
  \begin{equation}\label{J-1}
  J_{1,1} \leq 2 \eps \|\nabla \xi \|_{L^2}^2 + C_\eps \|\Phi \|_{L^2}^{2} \|\nabla\xi_1 \|_{L^2}^{4/3}.
  \end{equation}
Proceeding as above for other terms in \eqref{proof-unique.3}, we finally get
  $$J_1\leq n_1\eps \|\nabla \Phi \|_{L^2}^2 + C_\eps \|\Phi \|_{L^2}^2 \big(\|\nabla\Phi_1 \|_{L^2}^{4/3} + \|\nabla\Phi_1 \|_{L^2}^2 +  \|\nabla\Phi_2 \|_{L^2}^{4/3} + \|\nabla\Phi_2 \|_{L^2}^2 \big),$$
where $n_1$ is some integer and $\|\nabla \Phi \|_{L^2}^2= \|\nabla \xi \|_{L^2}^2 + \|\nabla \eta \|_{L^2}^2$. The estimate of $J_2$ in \eqref{proof-unique.1.5} is similar, and thus we finally get, for some $n\in \N$,
  $$\aligned
  &\quad\ \big| \big\<\Phi, f_R(\Phi_1) b(\Phi_1, \Phi_1) -f_R(\Phi_2) b(\Phi_2, \Phi_2) \big\> \big| \\
  &\leq n \eps \|\nabla \Phi \|_{L^2}^2 + C_\eps \|\Phi \|_{L^2}^2 \big(\|\nabla\Phi_1 \|_{L^2}^{4/3} + \|\nabla\Phi_1 \|_{L^2}^2 +  \|\nabla\Phi_2 \|_{L^2}^{4/3} + \|\nabla\Phi_2 \|_{L^2}^2 \big).
  \endaligned $$
Taking $\eps = 1/n$ and substituting this estimate into \eqref{proof-unique.1}, we arrive at
  $$\d\|\Phi \|_{L^2}^2 \leq C_n \|\Phi \|_{L^2}^2 \big(\|\nabla\Phi_1 \|_{L^2}^{4/3} + \|\nabla\Phi_1 \|_{L^2}^2 +  \|\nabla\Phi_2 \|_{L^2}^{4/3} + \|\nabla\Phi_2 \|_{L^2}^2 \big)\,\d t.  $$
The bounds in \eqref{proof-uniq-bounds} implies that the quantity in the bracket on the right hand side is integrable with respect to $t\in [0,T]$. Since $\Phi_0=0$, Gronwall's inequality yields, $\P$-a.s., $\Phi_t=0$ for all $t\in [0,T]$. Thus we have proved the pathwise uniqueness property of \eqref{stoch-MHD-cut}.
\end{proof}

\section{Scaling limit and proofs of the main result} \label{sec-proof}

Recall the sequences $\{\theta^N \}_{N\geq 1} \subset \ell^2$ defined in \eqref{theta-N}; we consider the following sequence of stochastic 3D MHD equations with cut-off:
  \begin{equation}\label{stoch-MHDEs}
  \d \Phi^N + f_R(\Phi^N) b(\Phi^N, \Phi^N)\,\d t = \big[\Delta \Phi^N + S_{\theta^N}(\Phi^N) \big]\,\d t + \frac{C_\nu}{\|\theta^N \|_{\ell^2}} \sum_{k,\alpha} \theta^N_k \Pi(\sigma_{k,\alpha} \cdot\nabla \Phi^N)\, \d W^{k,\alpha}
  \end{equation}
subject to the initial data $\Phi^N_0= \big(\xi^N_0, \eta^N_0 \big)^\ast \in B_H(K)$. For any $T>0$, Theorem \ref{thm-existence} implies that there exists a pathwise unique strong solution $\Phi^N_t= \big(\xi^N_t, \eta^N_t \big)^\ast $ to \eqref{stoch-MHDEs} satisfying
  \begin{equation}\label{bounds}
  \|\Phi^N \|_{L^\infty(0,T; H)} \vee \|\Phi^N \|_{L^2(0,T; V)} \leq C_{K,\delta, R, T},
  \end{equation}
where $ C_{K,\delta, R, T}$ is some constant independent of $\nu$ and $N$; furthermore, for any divergence free test vector field $v$, the following equalities hold $\P$-a.s. on $[0,T]$:
  \begin{equation}\label{solu.1}
  \aligned
  \< \xi^N_t,v\> =&\, \<\xi^N_0, v\>+ \int_0^t f_R(\Phi^N_s) \Big[\big\< \xi^N_s,\L_{u^N_s}^{\ast}v \big\> - \big\< \eta^N_s,\L_{B^N_s}^{\ast}v \big\> \Big] \,\d s\\
  &\, + \int_0^t \< \xi^N_s,\Delta v + S_{\theta^N}(v) \>\,\d s -\frac{C_{\nu}}{\| \theta^N\|_{\ell^2}} \sum_{k,\alpha}\theta^N_k \int_0^t \< \xi^N_s, \sigma_{k,\alpha}\cdot \nabla v \> \,\d W_{s}^{k,\alpha},
  \endaligned
  \end{equation}
  \begin{equation}\label{solu.2}
  \aligned
  \< \eta^N_t,v\> =&\, \<\eta^N_0, v\>+ \int_0^t f_R(\Phi^N_s) \Big[ \big\< \eta^N_s,\L_{u^N_s}^{\ast}v \big\> - \big\< \xi^N_s,\L_{B^N_s}^\ast v \big\> -2 \big<T(B^N_s,u^N_s), v\big> \Big] \,\d s\\
  &\, + \int_0^t \< \eta^N_s,\Delta v + S_{\theta^N}(v) \>\,\d s -\frac{C_{\nu}}{\| \theta^N\|_{\ell^2}} \sum_{k,\alpha}\theta_{k} \int_0^t \< \eta^N_s, \sigma_{k,\alpha}\cdot \nabla v \> \,\d W_{s}^{k,\alpha} .
  \endaligned
  \end{equation}
The first main result of this section is the next scaling limit theorem.

\begin{theorem}[Scaling limit] \label{thm-limit}
Fix any $K>0$ and $T>0$, and assume that the initial data $\Phi^N_0$ converge weakly in $H$ to some $\Phi_0$. Then we can find big $\nu$ and $R$ such that, as $N\to \infty$, the solutions $\Phi^N$ converge in probability to the unique global solution $\Phi$ of the deterministic 3D MHD equations
  \begin{equation}\label{thm-limit.1}
  \partial_t \Phi + b(\Phi, \Phi) = \Big(1+ \frac35 \nu\Big) \Delta \Phi.
  \end{equation}
Moreover, denoting by $\|\cdot \|_{\mathcal X}= \|\cdot \|_{C([0,T], H^{-\delta})} \vee \|\cdot \|_{L^2(0,T;H)}$, then for any $\eps>0$,
  \begin{equation}\label{thm-limit.2}
  \lim_{N\to \infty} \sup_{\Phi_0\in B_H(K)} \P \big(\|\Phi^N(\cdot, \Phi_0) - \Phi(\cdot, \Phi_0) \|_{\mathcal X} > \eps \big) =0,
  \end{equation}
where we write $\Phi^N(\cdot, \Phi_0)$ (resp. $\Phi(\cdot, \Phi_0)$) for the unique solution to \eqref{stoch-MHDEs} (resp. \eqref{thm-limit.1}) with the initial condition $\Phi_0$.
\end{theorem}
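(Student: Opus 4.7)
The plan is to prove Theorem \ref{thm-limit} by a compactness-and-identification argument, exploiting the two asymptotic properties of $\{\theta^N\}$ recalled in the introduction: by \eqref{theta-N.1}, the martingale part of \eqref{stoch-MHDEs} vanishes in a suitable weak sense, while by \eqref{theta-N.2} its Stratonovich--It\^o corrector converges to the additional viscosity $\frac35\nu\,\Delta$. Once the deterministic limit is identified, pathwise uniqueness for \eqref{thm-limit.1} combined with the Gy\"ongy--Krylov principle upgrades distributional convergence of a subsequence to convergence in probability of the full sequence.

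First I would establish tightness of the laws of $\{\Phi^N\}$ in $\mathcal Y := C\big([0,T], H^{-\delta}\big) \cap L^2(0,T;H)$. The pathwise bound \eqref{bounds} controls $\Phi^N$ uniformly in $L^\infty(0,T;H) \cap L^2(0,T;V)$. Time regularity is then obtained from \eqref{solu.1}--\eqref{solu.2}, in parallel with the derivation of \eqref{expectations.1}--\eqref{expectations.2}: the deterministic drifts (nonlinearity, Laplacian, and corrector $S_{\theta^N}$, the latter being a bounded second-order operator) yield a uniform $W^{1,2}\big(0,T;H^{-6}\big)$ bound, and the It\^o integrals yield uniform $W^{1/3,4}\big(0,T;H^{-6}\big)$ moments via the BDG inequality together with the $\ell^2$-normalization $\|\theta^N\|_{\ell^2}$ that keeps the quadratic variation bounded. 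The Aubin--Simon compact embeddings \eqref{Simon-embedding} and Prohorov's theorem then deliver tightness in $\mathcal Y$.

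Next, applying Skorokhod's representation theorem I would pass to a subsequence $\{\Phi^{N_i}\}$ converging $\tilde\P$-a.s.\ in $\mathcal Y$ to some limit $\tilde\Phi$, and pass to the limit in \eqref{solu.1}--\eqref{solu.2} term by term. Strong $L^2(0,T;H)$ convergence combined with the weak-$\ast$ $L^\infty(0,T;H)$ bound handles the quadratic drifts $b$ and $T(B,u)$; the cut-off $f_R(\Phi^{N_i}_s)$ converges a.e.\ in $s$ because $H^{-\delta}$-convergence is uniform in $s$ and $f_R$ is continuous; the Laplacian passes by linearity; the corrector term uses the key identity \eqref{theta-N.2}, $S_{\theta^{N_i}}(v) \to \frac35\nu\,\Delta v$ in $L^2$ for any smooth divergence-free $v$; and the stochastic integrals against $v$ vanish in $L^2(\tilde\Omega)$ since their quadratic variation is bounded by $C_\nu^2 \big(\|\theta^{N_i}\|_{\ell^\infty}/\|\theta^{N_i}\|_{\ell^2}\big)^2 \|v\|_{H^1}^2 \|\Phi^{N_i}\|_{L^2(0,T;L^2)}^2$, which tends to $0$ by \eqref{theta-N.1} and \eqref{bounds}. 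Hence $\tilde\Phi$ solves the cut-off deterministic equation $\partial_t \tilde\Phi + f_R(\tilde\Phi)\, b(\tilde\Phi, \tilde\Phi) = \big(1+ \frac35\nu\big) \Delta \tilde\Phi$. The rescaling $\Psi(t,x) := (1+\frac35\nu)^{-1}\, \tilde\Phi\big(t/(1+\frac35\nu), x\big)$ converts \eqref{thm-limit.1} into the standard 3D MHD system, whose small-data threshold transports back to a global existence radius of order $r_0(1+\frac35\nu)$ in $H$. Choosing $\nu$ so large that $r_0(1+\frac35\nu) > K$, for every $\Phi_0 \in B_H(K)$ there is a unique global solution $\Phi$ of \eqref{thm-limit.1} with a deterministic bound $M(K,\nu)$ on $\|\Phi\|_{C([0,T], H^{-\delta})}$; fixing $R := M(K,\nu)+1$ makes the cut-off inactive along the limit, so $\tilde\Phi = \Phi$, and pathwise uniqueness for \eqref{thm-limit.1} forces convergence of the whole sequence.

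Since the limit $\Phi$ is deterministic, convergence in law of $\Phi^N$ in $\mathcal Y$ is equivalent to convergence in probability, yielding the first assertion. The uniformity \eqref{thm-limit.2} over $\Phi_0 \in B_H(K)$ is then obtained by a contradiction argument: if it failed, one could find $\Phi_0^{N_j} \in B_H(K)$ and $\eps>0$ with $\P\big(\|\Phi^{N_j}(\cdot,\Phi_0^{N_j}) - \Phi(\cdot,\Phi_0^{N_j})\|_{\mathcal X} > \eps\big) > \eps$; weak compactness of $B_H(K)$ provides a further subsequence with $\Phi_0^{N_j} \rightharpoonup \Phi_0^\infty$, and applying the pointwise limit result together with continuous dependence of solutions of \eqref{thm-limit.1} on weakly convergent initial data in $H$ (which follows from parabolic regularization and the energy estimate) produces the desired contradiction. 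The main obstacle is the identification step under the cut-off: to pass the limit through the quadratic nonlinearity one needs genuine strong convergence in $L^2(0,T;H)$, and this in turn forces the delicate fractional-time moment estimate for the stochastic integral that makes the Aubin--Simon embedding applicable. Once this compactness is secured, the scaling limits \eqref{theta-N.1} and \eqref{theta-N.2} plug in cleanly.
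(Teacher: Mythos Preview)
Your proposal is correct and follows essentially the same route as the paper: tightness via the uniform bounds \eqref{bounds} and fractional-time estimates, Skorokhod representation, identification of the limit using \eqref{theta-N.1} for the martingale part and \eqref{theta-N.2} for the corrector, then the observation that a deterministic limit upgrades convergence in law to convergence in probability, and finally a contradiction argument for the uniformity in \eqref{thm-limit.2}.

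Two small remarks. First, your rescaling $\Psi(t,x)=(1+\tfrac35\nu)^{-1}\tilde\Phi\big(t/(1+\tfrac35\nu),x\big)$ is a clean alternative to the paper's Lemma~\ref{lem-3D-MHD}, which instead proves global existence and exponential decay for \eqref{thm-limit.1} by a direct energy estimate; both yield an a priori bound $\|\Phi(\cdot,\Phi_0)\|_{C([0,T];H^{-\delta})}\le M(K,\nu)$, after which one sets $R=M(K,\nu)+1$ to deactivate the cut-off. Second, in your martingale bound the factor $\|v\|_{H^1}^2$ is not quite right: writing $\langle\xi,\sigma_{k,\alpha}\!\cdot\!\nabla v\rangle=\langle(\nabla v)^\ast\xi,\sigma_{k,\alpha}\rangle$ and using Bessel's inequality for the orthonormal family $\{\sigma_{k,\alpha}\}$ gives $\sum_{k,\alpha}|\langle\xi,\sigma_{k,\alpha}\!\cdot\!\nabla v\rangle|^2\le\|(\nabla v)^\ast\xi\|_{L^2}^2\le\|\nabla v\|_{L^\infty}^2\|\xi\|_{L^2}^2$, so one needs $\|\nabla v\|_{L^\infty}$ rather than $\|v\|_{H^1}$; since the test field $v$ is smooth this is harmless. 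The mention of Gy\"ongy--Krylov is unnecessary here, as you yourself note: the limit being deterministic already forces convergence in probability of the full sequence.
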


We follow some of the arguments below the proof of Lemma \ref{lem-apriori}.  Let $Q^N$ be the law of $(\Phi^N_t)_{t\in [0,T]}, \, N \geq 1$. Using the uniform bounds \eqref{bounds} and the equations \eqref{solu.1} and \eqref{solu.2}, we can prove similar estimates as \eqref{expectations.1} and \eqref{expectations.2}. Therefore, the family $\{Q^N\}_{N\geq 1}$ of laws is tight on $L^2(0,T; H)$ and on $C\big([0,T], H^{-\delta} \big)$. Thus, by the Prohorov theorem we can find a subsequence $\{Q^{N_i} \}_{i\geq 1}$ converging weakly to some probability measure $Q$, supported on $L^2(0,T; H)$ and on $C\big([0,T], H^{-\delta} \big)$. Applying the Skorohod theorem yields a probability space $\big(\hat\Omega, \hat{\mathcal F}, \hat\P\big)$ and a sequence of processes $\{\hat\Phi^{N_i} \}_{i\geq 1}$ and $\hat\Phi$ defined on $\hat\Omega$, such that
\begin{itemize}
\item[(a)] $\hat\Phi$ is distributed as $Q$ and $\hat\Phi^{N_i}$ is distributed as $Q^{N_i}$ for all $i\geq 1$;
\item[(b)] $\hat\P$-a.s., $\hat\Phi^{N_i}$ converges as $i\to \infty$ to $\hat\Phi$ in the topology of $L^2(0,T; H)$ and on $C\big([0,T], H^{-\delta} \big)$.
\end{itemize}
We remark that, for every $N\geq 1$, if we consider $Q^N$ together with the laws of the Brownian motions $\{W^{k,\alpha}: k\in \Z^3_0, \alpha=1,2 \}$, then we can find on the new probability space $\hat\Omega$ a sequence of Brownian motions $\{\hat W^{N_i, k,\alpha}: k\in \Z^3_0, \alpha=1,2 \}_{i\geq 1} $, such that, for any $i\geq 1$, $\hat\Phi^{N_i}$ and $\hat W^{N_i, k,\alpha},\, k\in \Z^3_0,\, \alpha=1,2$ satisfy the equations \eqref{solu.1} and \eqref{solu.2}. Let $\hat u^{N_i}$ and $\hat B^{N_i}$ be the velocity field and magnetic field on the new probability space $\hat\Omega$, corresponding to the solution $\hat\Phi^{N_i}= (\hat\xi^{N_i}, \hat\eta^{N_i})$; then,
  \begin{equation}\label{solu.3}
  \aligned
  \< \hat\xi^{N_i}_t,v\> =&\, \<\xi^{N_i}_0, v\>+ \int_0^t f_R(\hat\Phi^{N_i}_s) \Big[ \big\< \hat \xi^{N_i}_s, \L_{\hat u^{N_i}_s}^{\ast}v \big\> - \big\< \hat\eta^{N_i}_s, \L_{\hat B^{N_i}_s}^{\ast}v \big\> \Big] \,\d s\\
  &\, + \int_0^t \< \hat\xi^{N_i}_s, \Delta v + S_{\theta^{N_i}}(v) \> \,\d s -\frac{C_{\nu}}{\| \theta^{N_i}\|_{\ell^2}} \sum_{k,\alpha}\theta^{N_i}_k \int_0^t \< \hat\xi^{N_i}_s, \sigma_{k,\alpha}\cdot \nabla v \> \,\d \hat W_{s}^{N_i, k,\alpha},
  \endaligned
  \end{equation}
  \begin{equation}\label{solu.4}
  \aligned
  \< \hat\eta^{N_i}_t,v\> =&\, \<\eta^{N_i}_0, v\>+ \int_0^t f_R(\hat\Phi^{N_i}_s) \Big[ \big\< \hat\eta^{N_i}_s, \L_{\hat u^{N_i}_s}^{\ast}v \big\> - \big\< \hat\xi^{N_i}_s, \L_{\hat B^{N_i}_s}^\ast v \big\> -2 \big<T(\hat B^{N_i}_s, \hat u^{N_i}_s), v\big> \Big] \,\d s\\
  &\, + \int_0^t \< \hat\eta^{N_i}_s,\Delta v + S_{\theta^{N_i}}(v) \>\,\d s -\frac{C_{\nu}}{\| \theta^{N_i} \|_{\ell^2}} \sum_{k,\alpha}\theta_{k} \int_0^t \< \hat\eta^{N_i}_s, \sigma_{k,\alpha}\cdot \nabla v \> \,\d \hat W_{s}^{N_i,k,\alpha} .
  \endaligned
  \end{equation}
With these results in mind, we can prove

\begin{lemma}\label{lem-limit}
Let $\Phi_0$ be the weak limit of the initial data $\{\Phi^N_0\}_{N\geq 1}$; then the limit process $\hat\Phi$ solves in the weak sense the deterministic 3D MHD equations with cut-off:
  \begin{equation}\label{lem-limit.1}
  \partial_t \hat\Phi + f_R(\hat\Phi) b(\hat\Phi, \hat\Phi) = \Big(1+ \frac35 \nu\Big) \Delta \hat\Phi, \quad \hat\Phi|_{t=0} = \Phi_0.
  \end{equation}
\end{lemma}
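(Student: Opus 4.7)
The plan is to pass to the limit as $i\to\infty$ term by term in the weak formulations \eqref{solu.3}--\eqref{solu.4}, using the $\hat\P$-a.s. convergence $\hat\Phi^{N_i} \to \hat\Phi$ in $L^2(0,T;H)\cap C([0,T],H^{-\delta})$ afforded by the Skorohod representation, the uniform bounds \eqref{bounds}, and the scaling relations \eqref{theta-N.1}--\eqref{theta-N.2}. Fix a smooth divergence free test field $v$ on $\T^3$ and $t\in[0,T]$. The linear pairings $\<\hat\xi^{N_i}_t,v\>$ and the initial-data terms converge from the $C([0,T],H^{-\delta})$ convergence and the assumed weak convergence of $\Phi^{N_i}_0$ in $H$, while the Laplacian term $\int_0^t\<\hat\xi^{N_i}_s,\Delta v\>\,\d s$ converges by the strong $L^2(0,T;H)$ convergence of $\hat\xi^{N_i}$. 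For the Stratonovich--It\^o corrector I invoke the key limit \eqref{theta-N.2}: since $S_{\theta^{N_i}}(v)\to \tfrac{3}{5}\nu\Delta v$ strongly in $L^2(\T^3,\R^3)$ while $\hat\xi^{N_i}\to\hat\xi$ strongly in $L^2(0,T;H)$, one obtains $\int_0^t\<\hat\xi^{N_i}_s,S_{\theta^{N_i}}(v)\>\,\d s \to \tfrac{3}{5}\nu\int_0^t\<\hat\xi_s,\Delta v\>\,\d s$, which combines with the standard Laplacian to produce the enhanced viscosity $(1+\tfrac{3}{5}\nu)\Delta\hat\Phi$ in \eqref{lem-limit.1}.

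For the nonlinear terms, the cut-off factor $f_R(\hat\Phi^{N_i}_s)$ converges uniformly in $s\in[0,T]$ to $f_R(\hat\Phi_s)$, since $f_R\in C^1_b$ acts on the $H^{-\delta}$-norm and the Skorohod procedure yields uniform convergence in $H^{-\delta}$. The bilinear pairings $\<\hat\xi^{N_i}_s,\L_{\hat u^{N_i}_s}^{\ast}v\>$, $\<\hat\eta^{N_i}_s,\L_{\hat B^{N_i}_s}^{\ast}v\>$ and $\<T(\hat B^{N_i}_s,\hat u^{N_i}_s),v\>$ are jointly continuous quadratic forms in $\hat\Phi^{N_i}_s$ tested against $v$ and its first derivatives: since the Biot--Savart map is bounded from $L^2$ to $H^1$ on mean-zero divergence free fields, the strong $L^2(0,T;L^2)$ convergence of $\hat\xi^{N_i},\hat\eta^{N_i}$ transfers to strong $L^2(0,T;L^2)$ convergence of $\hat u^{N_i},\hat B^{N_i}$ and of their gradients. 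Each of these products therefore converges in $L^1(0,T;L^1)$, and the uniform bound \eqref{bounds} provides the dominating function needed to pass to the limit inside the time integrals with the extra factor $f_R(\hat\Phi^{N_i}_s)$.

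The main obstacle is the martingale term
$$M^{N_i}_t(v) = \frac{C_\nu}{\|\theta^{N_i}\|_{\ell^2}} \sum_{k,\alpha} \theta^{N_i}_k \int_0^t\<\hat\xi^{N_i}_s,\sigma_{k,\alpha}\cdot\nabla v\>\,\d \hat W^{N_i,k,\alpha}_s$$
(and its analogue with $\hat\eta^{N_i}$ appearing in \eqref{solu.4}). Using the It\^o isometry together with the covariation structure \eqref{qudratic-var} and the symmetry \eqref{theta-sym}, one estimates
$$\hat\E|M^{N_i}_t(v)|^2 \leq \frac{C\,C_\nu^2\,\|\theta^{N_i}\|_{\ell^\infty}^2}{\|\theta^{N_i}\|_{\ell^2}^2}\,\|\nabla v\|_{L^\infty}^2\,\hat\E\int_0^t\|\hat\xi^{N_i}_s\|_{L^2}^2\,\d s \leq C_{K,v,T,\nu}\,\frac{\|\theta^{N_i}\|_{\ell^\infty}^2}{\|\theta^{N_i}\|_{\ell^2}^2},$$
where the sum over $(k,\alpha)$ has been bounded by Parseval in the CONS $\{\sigma_{k,\alpha}\}$ after extracting $\|\theta^{N_i}\|_{\ell^\infty}^2$, and \eqref{bounds} has been used. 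By \eqref{theta-N.1} the right-hand side vanishes as $i\to\infty$, so $M^{N_i}_t(v)\to 0$ in $L^2(\hat\Omega)$, in particular in probability. Collecting all these convergences identifies $\hat\Phi$ as a weak solution of \eqref{lem-limit.1} with initial datum $\Phi_0$.
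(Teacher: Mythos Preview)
Your argument is correct and follows essentially the same route as the paper's proof: the paper likewise treats the first-line terms as ``standard'' from the Skorohod convergence, uses \eqref{theta-N.2} for the corrector, and disposes of the martingale by the identical It\^o-isometry/Parseval bound yielding the factor $\|\theta^{N_i}\|_{\ell^\infty}^2/\|\theta^{N_i}\|_{\ell^2}^2$. Your write-up is in fact more explicit than the paper's on the nonlinear terms, correctly exploiting that Biot--Savart gains one derivative so that $\hat u^{N_i},\nabla\hat u^{N_i},\hat B^{N_i},\nabla\hat B^{N_i}$ converge in $L^2(0,T;L^2)$.
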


\begin{proof}
Let $\hat u$ and $\hat B$ be the velocity field and magnetic field associated to the limit process $\hat\Phi= (\hat\xi, \hat\eta)$. Thanks to item (b) above, it is standard to prove the convergence of the terms in the first line of \eqref{solu.3}. Moreover, by the key limit \eqref{theta-N.2}, it is clear that, $\hat\P$-a.s., as $i\to \infty$,
  $$\int_0^\cdot \< \hat\xi^{N_i}_s, \Delta v + S_{\theta^{N_i}}(v) \> \,\d s \to \Big(1+ \frac35 \nu\Big) \int_0^\cdot \< \hat\xi_s, \Delta v \> \,\d s$$
in the topology of $C([0,T],\R)$.

Next, we deal with the martingale part: by the It\^o isometry,
  $$\aligned
  &\ \hat\E \bigg[\Big|\frac{C_{\nu}}{\| \theta^{N_i}\|_{\ell^2}} \sum_{k,\alpha}\theta^{N_i}_k \int_0^t \big\< \hat\xi^{N_i}_s, \sigma_{k,\alpha}\cdot \nabla v \big\> \,\d \hat W_{s}^{N_i, k,\alpha} \Big|^2 \bigg] \\
  = &\ \hat\E \bigg[ \frac{C_{\nu}^2}{\| \theta^{N_i}\|_{\ell^2}^2} \sum_{k,\alpha} (\theta^{N_i}_k)^2 \int_0^t \big\< \hat\xi^{N_i}_s, \sigma_{k,\alpha}\cdot \nabla v \big\>^2 \,\d s \bigg] \\
  \leq &\ C_{\nu}^2\frac{\| \theta^{N_i}\|_{\ell^\infty}^2}{\| \theta^{N_i}\|_{\ell^2}^2}\, \hat\E\int_0^t \sum_{k,\alpha} \big\< \hat\xi^{N_i}_s, \sigma_{k,\alpha}\cdot \nabla v \big\>^2 \,\d s.
  \endaligned $$
Note that $\hat\Phi^{N_i}= (\hat\xi^{N_i}, \hat\eta^{N_i})$ has the same law as the solution $\Phi^{N_i}$ to \eqref{stoch-MHDEs}, thus it fulfils the estimates \eqref{bounds}. We have
  $$\sum_{k,\alpha} \big\< \hat\xi^{N_i}_s, \sigma_{k,\alpha}\cdot \nabla v \big\>^2= \sum_{k,\alpha} \big\< (\nabla v)^\ast \hat\xi^{N_i}_s, \sigma_{k,\alpha} \big\>^2 \leq \big\|(\nabla v)^\ast \hat\xi^{N_i}_s \big\|_{L^2}^2 \leq C_{K,\delta, R, T} \|\nabla v\|_\infty^2, $$
where in the second step we have used the fact that $\{\sigma_{k,\alpha} \}_{k\in \Z^3_0, \alpha=1,2}$ is an orthonormal family in $L^2(\T^3, \R^3)$. Consequently,
  $$\hat\E \bigg[\Big|\frac{C_{\nu}}{\| \theta^{N_i}\|_{\ell^2}} \sum_{k,\alpha}\theta^{N_i}_k \int_0^t \big\< \hat\xi^{N_i}_s, \sigma_{k,\alpha}\cdot \nabla v \big\> \,\d \hat W_{s}^{N_i, k,\alpha} \Big|^2 \bigg] \leq \ C_{\nu}^2 C_{K,\delta, R, T} \|\nabla v\|_\infty^2 \frac{\| \theta^{N_i}\|_{\ell^\infty}^2}{\| \theta^{N_i}\|_{\ell^2}^2}. $$
The right hand side vanishes as $i\to \infty$ due to the limit \eqref{theta-N.1}, thus the martingale part tends to 0 in the mean square sense. Summarizing the above arguments, we have proved
  $$\< \hat\xi_t,v\> = \<\xi_0, v\>+ \int_0^t f_R(\hat\Phi_s) \Big[ \big\< \hat \xi_s, \L_{\hat u_s}^{\ast}v \big\> - \big\< \hat\eta_s, \L_{\hat B_s}^{\ast}v \big\> \Big] \,\d s + \Big(1+ \frac35 \nu\Big) \int_0^t \< \hat\xi_s, \Delta v\> \,\d s . $$
In the same way, we can prove the second equation:
  $$\aligned
  \< \hat\eta_t,v\> =&\, \<\eta_0, v\>+ \Big(1+ \frac35 \nu\Big) \int_0^t \< \hat\eta_s,\Delta v \> \,\d s\\
  &\, + \int_0^t f_R(\hat\Phi_s) \Big[ \big\< \hat\eta_s, \L_{\hat u_s}^{\ast}v \big\> - \big\< \hat\xi_s, \L_{\hat B_s}^\ast v \big\> -2 \big<T(\hat B_s, \hat u_s), v\big> \Big] \,\d s.
  \endaligned $$
This finishes the proof.
\end{proof}

Next we give a short proof of the following well known result.

\begin{lemma}\label{lem-3D-MHD}
Given $K>0$, there exists a big $\nu=\nu(K)>0$ such that for all $\Phi_0\in B_H(K)$, the deterministic 3D MHD equations \eqref{thm-limit.1} admit a unique global solution $\Phi(\cdot,\Phi_0)$; moreover, there exist some $C_K>0$ and $\lambda=\lambda(K)>0$ such that for any $\Phi_0 \in B_H(K)$, it holds
  \begin{equation}\label{exponential-decay}
  \|\Phi(t,\Phi_0) \|_{L^2} \leq C_K\, e^{-\lambda t}, \quad t\geq 0.
  \end{equation}
\end{lemma}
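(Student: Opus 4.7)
The plan is to run a classical energy estimate and use the enhanced viscosity coefficient $\mu := 1 + \tfrac{3}{5}\nu$ to absorb the nonlinearity. First I would test \eqref{thm-limit.1} against $\Phi$ in $L^{2}$; since the components of $\Phi$ are divergence free, this produces
\[
\frac{d}{dt}\|\Phi\|_{L^{2}}^{2} + 2\mu\,\|\nabla\Phi\|_{L^{2}}^{2} = -2\langle \Phi, b(\Phi,\Phi)\rangle.
\]
For the nonlinear term I would reuse the $L^{3}$ type estimates in Steps 1--2 of the proof of Lemma \ref{lem-apriori}, but interpolating now between $L^{2}$ and $H^{1}$ instead of $H^{-\delta}$ and $H^{1}$. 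The upshot is the Navier--Stokes-type bound
\[
|\langle \Phi, b(\Phi,\Phi)\rangle| \leq C\,\|\Phi\|_{L^{2}}^{3/2}\,\|\nabla\Phi\|_{L^{2}}^{3/2},
\]
where I already used the Poincar\'e inequality to replace $\|\Phi\|_{H^{1}}$ by $\|\nabla\Phi\|_{L^{2}}$ up to constants.

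Next I would apply Young's inequality with exponents $4/3$ and $4$ to split this into a piece that the dissipation can absorb and a super-quadratic piece penalised by $\mu^{-3}$:
\[
2C\,\|\Phi\|_{L^{2}}^{3/2}\|\nabla\Phi\|_{L^{2}}^{3/2} \leq \mu\,\|\nabla\Phi\|_{L^{2}}^{2} + C_{1}\mu^{-3}\|\Phi\|_{L^{2}}^{6}.
\]
Substituting back and using Poincar\'e $\|\nabla\Phi\|_{L^{2}}^{2}\geq c_{0}\|\Phi\|_{L^{2}}^{2}$ gives the scalar ODE
\[
\frac{d}{dt}\|\Phi\|_{L^{2}}^{2} \leq -\mu c_{0}\,\|\Phi\|_{L^{2}}^{2} + C_{1}\mu^{-3}\,\|\Phi\|_{L^{2}}^{6}.
\]
Now I choose $\nu=\nu(K)$ so large that $\mu^{4} \geq 2C_{1}K^{4}/c_{0}$. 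A straightforward continuity argument on $y(t):=\|\Phi(t)\|_{L^{2}}^{2}$ then shows $y(t)\leq K^{2}$ for all times it is defined: as long as $y\leq K^{2}$, the inequality $C_{1}\mu^{-3}y^{3}\leq \tfrac{\mu c_{0}}{2}y$ holds, whence $y'\leq -\tfrac{\mu c_{0}}{2}y$. This yields the exponential decay \eqref{exponential-decay} with $C_{K}=K$ and $\lambda = \mu c_{0}/2$.

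For global existence, I would invoke the classical local well-posedness of strong solutions to the deterministic 3D MHD system in $H$ (as in \cite{SerTem,HeXin}), whose blow-up criterion is controlled by the $L^{\infty}_{t}L^{2}_{x}\cap L^{2}_{t}H^{1}_{x}$ norm under the smallness condition we have just established; the a priori bound $\|\Phi\|_{L^{\infty}(0,T;H)}\leq K$ together with the integrated dissipation inequality then rules out blow-up and extends the solution to $[0,\infty)$. Uniqueness follows from the same energy argument applied to the difference of two solutions, exactly as in the pathwise uniqueness proof of \eqref{stoch-MHD-cut}. The only delicate point in this plan is the final step: making precise which strong-solution theory for 3D MHD is used so that the smallness regime implied by large $\mu$ falls inside its range of validity; beyond that, everything reduces to bookkeeping with Young's and Poincar\'e's inequalities.
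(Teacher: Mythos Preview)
Your proposal is correct and follows essentially the same approach as the paper: derive the energy identity, bound the nonlinearity by $C\|\Phi\|_{L^2}^{3/2}\|\nabla\Phi\|_{L^2}^{3/2}$ via the $L^3$/interpolation estimates, apply Young's inequality with exponents $4/3,4$ to produce the $\mu^{-3}\|\Phi\|_{L^2}^6$ term, and then use Poincar\'e to close a scalar differential inequality yielding exponential decay for $\mu$ large relative to $K$. The only cosmetic differences are that the paper splits the nonlinear estimate term by term (Steps~1--2) before summing, and solves the resulting Bernoulli-type inequality explicitly (obtaining $C_K=2^{1/4}K$) rather than using your trapping/continuity argument; your treatment of global existence via local theory plus the a~priori bound is in fact slightly more explicit than the paper's.
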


\begin{proof}
The proof is simpler than that of Lemma \ref{lem-apriori}. We denote by $\nu_1= 1+ \frac35 \nu$; then,
  $$\frac{\d}{\d t} \|\Phi\|_{L^2}^2 = -2 \<\Phi, b(\Phi, \Phi)\> - 2\nu_1 \|\nabla \Phi\|_{L^2}^2. $$
More precisely,
  \begin{equation}\label{lem-3D-MHD.1}
  \frac{\d}{\d t} \big(\|\xi \|_{L^2}^2 + \|\eta \|_{L^2}^2 \big) = -2\big(\<\xi, b_1(\Phi, \Phi)\> + \<\eta, b_2(\Phi, \Phi)\>\big) -2\nu_1 \big(\|\nabla \xi \|_{L^2}^2 + \|\nabla \eta \|_{L^2}^2\big) .
  \end{equation}
We will estimate separately the first two quantities on the right-hand side.

\emph{Step 1.} By the definition of $b_1(\Phi, \Phi)$, we have
  $$\<\xi, b_1(\Phi, \Phi)\> = \<\xi, \L_u \xi \> - \<\xi, \L_B \eta \> =: I_1 + I_2. $$
First, since $u$ is divergence free, we have $I_1= -\<\xi, \xi\cdot\nabla u \>$ and by H\"older's inequality, Sobolev embedding and interpolation inequality,
  $$|I_1| \leq \|\xi \|_{L^3}^2 \|\nabla u \|_{L^3} \leq C\|\xi \|_{L^3}^3 \leq C\|\xi \|_{H^{1/2}}^3 \leq C\|\xi \|_{L^2}^{3/2} \|\xi \|_{H^1}^{3/2} \leq \eps \|\xi \|_{H^1}^2 + C^4 \eps^{-3} \|\xi \|_{L^2}^6, $$
where  $\eps>0$ is a small constant. Next,
  $$I_2= - \<\xi, B\cdot\nabla \eta \> + \<\xi, \eta\cdot\nabla B \>= :I_{2,1} + I_{2,2}. $$
For $I_{2,1}$, again by H\"older's inequality,
  $$|I_{2,1}| \leq \|\xi\|_{L^3} \|B\|_{L^6} \|\nabla \eta\|_{L^2}\leq C \|\xi \|_{H^{1/2}} \|B \|_{H^1} \|\eta\|_{H^1} \leq C\|\xi \|_{L^2}^{1/2} \|\xi \|_{H^1}^{1/2} \|\eta \|_{L^2} \|\eta\|_{H^1}. $$
By Young's inequality with exponents $\frac1{12} + \frac14 + \frac16 + \frac12 =1$, we obtain
  $$|I_{2,1}| \leq \eps\big(\|\xi \|_{H^1}^2 + \|\eta\|_{H^1}^2\big)+ C^4 \eps^{-3} \big(\|\xi \|_{L^2}^6 + \|\eta\|_{L^2}^6\big).$$
Similarly, for $I_{2,2}$, we have
  $$|I_{2,2}| \leq \|\xi\|_{L^3} \|\eta \|_{L^3} \|\nabla B\|_{L^3} \leq C \|\xi\|_{L^3} \|\eta \|_{L^3}^2 \leq C\|\xi \|_{L^2}^{1/2} \|\xi \|_{H^1}^{1/2} \|\eta \|_{L^2} \|\eta\|_{H^1} $$
and thus it admits the same estimate as $|I_{2,1}|$. In summary, we obtain
  $$|\<\xi, b_1(\Phi, \Phi)\>| \leq 3\eps\big(\|\xi \|_{H^1}^2 + \|\eta\|_{H^1}^2\big)+ C^4 \eps^{-3} \big(\|\xi \|_{L^2}^6 + \|\eta\|_{L^2}^6 \big).$$

\emph{Step 2.} Now we estimate
  $$\<\eta, b_2(\Phi, \Phi)\>= \<\eta, \L_u\eta \> - \<\eta, \L_B \xi\> -2 \<\eta, T(B,u)\>=: J_1+ J_2+ J_3. $$
The first term $J_1= -\<\eta, \eta\cdot\nabla u \>$ can be treated as follows:
  $$|J_1| \leq \|\eta \|_{L^3}^2 \|\nabla u \|_{L^3} \leq C \|\xi\|_{L^3} \|\eta \|_{L^3}^2 \leq \eps\big(\|\xi \|_{H^1}^2 + \|\eta\|_{H^1}^2\big)+ C^4 \eps^{-3} \big(\|\xi \|_{L^2}^6 + \|\eta\|_{L^2}^6 \big). $$
The second one $J_2$ is similar to $I_2$, and we have
  $$|J_2| \leq 2\eps\big(\|\xi \|_{H^1}^2 + \|\eta\|_{H^1}^2\big)+ C^4 \eps^{-3} \big(\|\xi \|_{L^2}^6 + \|\eta\|_{L^2}^6 \big).$$
Finally, by the definition of $T(B,u)$, we have
  $$\aligned
  |J_3| &= 2|\<\eta, T(B,u)\>| \leq C\|\eta \|_{L^3} \|\nabla B \|_{L^3}\|\nabla u \|_{L^3} \leq C\|\xi \|_{L^3} \|\eta \|_{L^3}^2 \\
  &\leq \eps\big(\|\xi \|_{H^1}^2 + \|\eta\|_{H^1}^2\big)+ C^4 \eps^{-3} \big(\|\xi \|_{L^2}^6 + \|\eta\|_{L^2}^6 \big).
  \endaligned $$
Summarizing these estimates leads to
  $$|\<\eta, b_2(\Phi, \Phi)\>| \leq 4\eps\big(\|\xi \|_{H^1}^2 + \|\eta\|_{H^1}^2\big)+ C^4 \eps^{-3} \big(\|\xi \|_{L^2}^6 + \|\eta\|_{L^2}^6 \big) $$
for another constant $C>0$ independent of $\eps$.

\emph{Step 3.} Substituting the estimates in \emph{Steps 1 and 2} into \eqref{lem-3D-MHD.1}, we arrive at
  $$\frac{\d}{\d t} \big(\|\xi \|_{L^2}^2 + \|\eta \|_{L^2}^2 \big) \leq - 2\nu_1 \big(\|\nabla \xi \|_{L^2}^2 + \|\nabla \eta \|_{L^2}^2\big) + 14 \eps\big(\|\xi \|_{H^1}^2 + \|\eta\|_{H^1}^2\big)+ C^4 \eps^{-3} \big(\|\xi \|_{L^2}^6 + \|\eta\|_{L^2}^6 \big); $$
written in more compact form, one has
  $$\frac{\d}{\d t} \|\Phi \|_{L^2}^2 \leq -2\nu_1 \|\nabla \Phi\|_{L^2}^2 + 14 \eps \|\Phi \|_{H^1}^2 + C^4 \eps^{-3} \|\Phi \|_{L^2}^6. $$
Choosing $\eps =\nu_1/14$ gives us
  \begin{equation}\label{lem-3D-MHD.2}
  \aligned
  \frac{\d}{\d t} \|\Phi \|_{L^2}^2 &\leq -\nu_1 \|\nabla \Phi\|_{L^2}^2 + \nu_1 \|\Phi \|_{L^2}^2 + C_1^4 \nu_1^{-3} \|\Phi \|_{L^2}^6 \\
  &\leq -(4\pi^2 -1) \nu_1 \|\Phi \|_{L^2}^2 +C_1^4 \nu_1^{-3} \|\Phi \|_{L^2}^6,
  \endaligned
  \end{equation}
where $C_1>0$ is another constant independent of $\nu_1 = 1+\frac35 \nu$ and in the second step we have used Poincar\'e's inequality on $\T^3$. This differential inequality can be solved explicitly and for $\|\Phi_0 \|_{L^2}\leq K$, if $\nu_1 = 1+\frac35 \nu$ is big enough such that
  $$\nu_1 \geq \frac{C_1 K}{\sqrt{\pi}}, $$
then one has
  $$\|\Phi(t,\Phi_0) \|_{L^2} \leq 2^{1/4} K e^{-(4\pi^2-1)\nu_1 t/2}, \quad \mbox{for all } t>0.$$
Thus we obtain \eqref{exponential-decay} with $C_K= 2^{1/4} K$ and $\lambda= (4\pi^2-1)\nu_1/2 >0$.
\end{proof}

Now we are ready to prove Theorem \ref{thm-limit}.

\begin{proof}[Proof of Theorem \ref{thm-limit}]
For fixed $K>0$, we choose $\nu= \nu(K)>0$ as in Lemma \ref{lem-3D-MHD}; if we take $R= C_K+1$, then the solution $\hat\Phi$ obtained in Lemma \ref{lem-limit} will not attain the cut-off threshold, and thus it coincides with the unique global solution $\Phi(\cdot,\Phi_0)$ of \eqref{thm-limit.1}. We conclude that the limit law $Q= \delta_{\Phi(\cdot,\Phi_0)}$, which is uniquely determined. The tightness of the family $\{Q^N\}_{N\geq 1}$ yields that the whole sequence converges weakly to $\delta_{\Phi(\cdot,\Phi_0)}$ as $N\to \infty$. As the limit is deterministic, we see that the processes $\{\Phi^N \}_{N\geq 1}$ converges in probability to $\Phi(\cdot,\Phi_0)$. This proves the first assertion of Theorem \ref{thm-limit}.

Next we turn to proving the second assertion of Theorem \ref{thm-limit}. We follow the idea of proof of \cite[Theorem 1.4]{FlaLuo21} and argue by contradiction. Suppose there exists $\eps_0>0$ small enough such that
  $$ \limsup_{N\to \infty} \sup_{\Phi_0\in B_H(K)} \P \big(\|\Phi^N(\cdot, \Phi_0) -\Phi (\cdot, \Phi_0) \|_{\mathcal X} >\eps_0 \big)>0, $$
where we have denoted by $\|\cdot \|_{\mathcal X}= \|\cdot \|_{L^2(0,T; H)} \vee \|\cdot \|_{C([0,T], H^{-\delta})}$. Recall that $\Phi^N(\cdot, \Phi_0)$ is the pathwise unique solution to \eqref{stoch-MHDEs} with initial condition $\Phi_0\in B_H(K)$, while $\Phi (\cdot, \Phi_0)$ is the unique global solution of the deterministic 3D MHD equations \eqref{thm-limit.1} with initial condition $\Phi_0$. Then we can find a subsequence of integers $\{N_i\}_{i\geq 1}$ and $\Phi^{N_i}_0\in B_H(K)$, such that (choose $\eps_0$ even smaller if necessary)
  \begin{equation}\label{proof-scaling-1}
  \P \Big(\|\Phi^{N_i}(\cdot, \Phi^{N_i}_0) -\Phi (\cdot, \Phi^{N_i}_0) \|_{\mathcal X} >\eps_0 \Big) \geq \eps_0 >0.
  \end{equation}
For each $i\geq 1$, let $Q^{N_i}$ be the law of $\Phi^{N_i}(\cdot, \Phi^{N_i}_0)$. Since $\{ \Phi^{N_i}_0 \}_{i\geq 1}$ is contained in the ball $B_H(K)$, there exists a subsequence of $\{\Phi^{N_i}_0 \}_{i\geq 1}$ (not relabelled for simplicity) converging weakly to some $\Phi_0\in B_H(K)$.

Similarly to the discussion at the beginning of this section, we can show that the family $\{Q^{N_i} \}_{i\geq 1}$ is tight on $\mathcal X= L^2(0,T; H) \cap C\big([0,T], H^{-\delta} \big)$, hence, up to a subsequence, $Q^{N_i}$ converges weakly to some probability measure $Q$ supported on $\mathcal X$. As a result, by Skorokhod's representation theorem, we can find a new probability space $\big( \tilde\Omega, \tilde{\mathcal F}, \tilde \P\big)$ and a sequence of processes $\big\{\tilde \Phi^{N_i} \big\}_{i\in \N}$ defined on $\tilde\Omega$, such that for each $i\in \N$, $\tilde \Phi^{N_i}$ has the same law $Q^{N_i}$ as $\Phi^{N_i}$, and $\tilde \P$-a.s., $\tilde \Phi^{N_i}$ converges as $i\to \infty$ to some $\tilde\Phi$ strongly in $\mathcal X$. As before, the limit $\tilde\Phi$ solves the deterministic 3D MHD equations \eqref{thm-limit.1} with initial condition $\Phi_0$. From this we conclude that $\tilde\Phi = \Phi (\cdot, \Phi_0)$, and thus, as $i\to \infty$, $\tilde \Phi^{N_i}$ converge in $\mathcal X$ to $\Phi (\cdot, \Phi_0)$ in probability, i.e., for any $\eps>0$,
  \begin{equation}\label{proof-scaling-2}
  \lim_{i\to \infty} \tilde\P \Big(\big\|\tilde \Phi^{N_i} -\Phi (\cdot, \Phi_0) \big\|_{\mathcal X} >\eps \Big) =0.
  \end{equation}
Note that $\tilde\Phi^{N_i} \stackrel{d}{\sim} Q^{N_i}$, \eqref{proof-scaling-1} implies
  \begin{equation}\label{proof-scaling-3}
  \tilde \P\Big(\big\|\tilde\Phi^{N_i} -\Phi (\cdot, \Phi^{N_i}_0) \big\|_{\mathcal X} >\eps_0 \Big) \geq \eps_0 >0.
  \end{equation}
We have the triangle inequality:
  \begin{equation}\label{proof-scaling-4}
  \big\|\tilde\Phi^{N_i} -\Phi (\cdot, \Phi^{N_i}_0) \big\|_{\mathcal X} \leq \big\|\tilde\Phi^{N_i} -\Phi(\cdot, \Phi_0) \big\|_{\mathcal X} + \big\|\Phi(\cdot, \Phi_0) -\Phi (\cdot, \Phi^{N_i}_0) \big\|_{\mathcal X}.
  \end{equation}
Recall that $\|\Phi^{N_i}_0 \|_{L^2} \leq K,\, i\geq 1$; Lemma \ref{lem-3D-MHD} implies that $\{\Phi(\cdot, \Phi^{N_i}_0) \}_{i\geq 1}$ is bounded in $L^\infty(0,T; H)$. This estimate and the first inequality in \eqref{lem-3D-MHD.2} further imply that the family is bounded in $L^2(0,T; V)$. One can also show its boundedness in $W^{1,2}\big(0,T; H^{-2} \big)$ by using the equations \eqref{thm-limit.1}. Then, by embedding results similar to those in \eqref{Simon-embedding}, the family $\big\{ \Phi (\cdot, \Phi^{N_i}_0) \big\}_{i\geq 1}$ is sequentially compact in $\mathcal X= L^2(0,T; H) \cap C\big([0,T], H^{-\delta} \big)$. Therefore, up to a subsequence, $\Phi (\cdot, \Phi^{N_i}_0)$ converges in $\mathcal X$ to some $\bar \Phi$, which can be shown to solve \eqref{thm-limit.1} since $\Phi^{N_i}_0$ converges weakly to $\Phi_0$. In other words, $\bar \Phi= \Phi(\cdot, \Phi_0)$ and $\big\|\Phi (\cdot, \Phi^{N_i}_0) -\Phi(\cdot, \Phi_0) \big\|_{\mathcal X} \to 0$ as $i\to \infty$. Combining this result with \eqref{proof-scaling-2}--\eqref{proof-scaling-4}, we get a contradiction.
\end{proof}

Finally we can provide the

\begin{proof}[Proof of Theorem \ref{thm-main}]
We divide the proof in 3 steps.

\emph{Step 1.} We fix $K>0$, and choose $\nu,\, R$ as in the proof of Theorem \ref{thm-limit}; we know that the unique global solution to \eqref{thm-limit.1} satisfies
  \begin{equation}\label{sect-1-bound}
  \|\Phi(\cdot\,, \Phi_0) \|_{C([0,T]; H^{-\delta})} \leq \|\Phi(\cdot\,, \Phi_0) \|_{L^\infty(0,T; L^2)}\leq C_K= R-1, \quad \mbox{for all } \Phi_0\in B_H(K).
  \end{equation}
Moreover, by the exponential decay \eqref{exponential-decay}, we can take $T>1$ big enough such that
  \begin{equation}\label{sect-1-bound.0}
  \|\Phi(\cdot\,, \Phi_0) \|_{L^2(T-1,T; H)} \leq r_0/2, \quad \mbox{for all } \Phi_0\in B_H(K),
  \end{equation}
where $r_0$ is the small number mentioned at the end of the paragraph involving the stochastic 3D MHD equations \eqref{stoch-MHD} (see \eqref{stoch-MHD.1} for the more precise form); the latter admit a unique global solution for any initial condition $\Phi_0\in B_H(r_0)$. Note that $r_0$ is independent of $\theta\in \ell^2$ and $\nu>0$. Without loss of generality we can assume $r_0\leq 1$.

\emph{Step 2.} Now we consider the approximating equations \eqref{stoch-MHDEs}, but with the same initial condition $\Phi_0$ as in \eqref{thm-limit.1}. Given $T>0$ as in \eqref{sect-1-bound.0} and arbitrary small $\eps>0$, Theorem \ref{thm-limit} implies that there exists $N_0= N_0(K, \nu, R, T, \eps)\in \N$ such that for all $N\geq N_0$, the pathwise unique strong solution $\Phi^N(\cdot, \Phi_0)$ of \eqref{stoch-MHDEs} satisfies, for all $\Phi_0\in B_H(K)$,
  \begin{equation}\label{sect-1-bound.1}
  \P \Big( \big\|\Phi^N(\cdot,\Phi_0) -\Phi(\cdot, \Phi_0) \big\|_{\mathcal X} \leq r_0/2 \Big) \geq 1- \eps,
  \end{equation}
where $\|\cdot \|_{\mathcal X}= \|\cdot \|_{L^2(0,T; H)} \vee \|\cdot \|_{C([0,T]; H^{-\delta})} $. In the sequel we fix such an $N\geq N_0$. Combining this with \eqref{sect-1-bound}, we deduce
  $$\P \Big( \big\|\Phi^N(\cdot,\Phi_0) \big\|_{C([0,T]; H^{-\delta})} < R \Big) \geq 1- \eps.$$
Defining the stopping times $\tau^N_R= \inf\big\{t>0: \|\Phi^N(t,\Phi_0) \|_{H^{-\delta}} >R\big\}$ ($\inf\emptyset =T$), then
  \begin{equation*}
   \P \big( \tau^N_R \geq T \big) \geq 1- \eps.
   \end{equation*}
For any $t\leq \tau^N_R$, we have
  $$f_R\big(\Phi^N(t,\Phi_0) \big)= f_R\big(\|\Phi^N(t,\Phi_0) \|_{H^{-\delta}} \big) =1,$$
and thus $\big\{\Phi^N(t,\Phi_0) \big\}_{t\leq \tau^N_R}$ is a solution to the following equation without cut-off:
  \begin{equation}\label{proof-thm-3D-NSE}
  \d \Phi^N + b\big(\Phi^N, \Phi^N\big)\,\d t = \big[ \Delta \Phi^N + S_{\theta^N} \big(\Phi^N \big) \big]\,\d t + \frac{C_\nu}{\|\theta^N \|_{\ell^2}} \sum_{k, \alpha} \theta^N_k \Pi\big( \sigma_{k,\alpha}\cdot \nabla \Phi^N \big) \,\d W^{k,\alpha}_t.
  \end{equation}
To sum up, with probability greater than $1-\eps$, uniformly in $\Phi_0\in B_H(K)$, this equation admits a pathwise unique solution on $[0,T]$.

\emph{Step 3.} Finally, let $\Omega_{N,\eps}$ be the event on the left hand side of \eqref{sect-1-bound.1}, then $\P(\Omega_{N,\eps}) \geq 1-\eps$. On the event $\Omega_{N,\eps}$, the triangle inequality yields
  \begin{equation*}
  \aligned
  \big\| \Phi^N(\cdot, \Phi_0) \big\|_{L^2(T-1,T; H)} &\leq \big\| \Phi^N(\cdot, \Phi_0) -\Phi(\cdot, \Phi_0) \big\|_{L^2(T-1,T; H)} + \| \Phi(\cdot, \Phi_0) \|_{L^2(T-1,T; H)}\\
  &\leq \frac{r_0}2 + \frac{r_0}2 =r_0,
  \endaligned
  \end{equation*}
where in the second step we have used \eqref{sect-1-bound.0}. This inequality holds for all $\omega\in \Omega_{N,\eps}$. As a result, for any $\omega\in \Omega_{N,\eps}$, there exists $t= t(\omega) \in [T-1, T]$ such that
  $$ \big\| \Phi^N(t(\omega), \Phi_0, \omega) \big\|_{L^2} \leq r_0 .$$
Therefore, restarting the equation \eqref{proof-thm-3D-NSE} at time $t(\omega)$  with the initial condition $\Phi^N(t(\omega), \Phi_0, \omega)$, we conclude that the solution extends to all $t> t(\omega)$ for every $\omega\in \Omega_{N,\eps}$. This completes the proof of Theorem \ref{thm-main} for $\nu>0$ taken as above and $\theta =\theta^N \in \ell^2$.
\end{proof}

\section{Appendix: a heuristic proof of \eqref{theta-N.2}}

In this part we provide a heuristic proof of the key limit \eqref{theta-N.2} in a special case; the full proof is quite long and the interested reader is referred to \cite[Section 5]{FlaLuo21}.

First, recall that, for a divergence free smooth vector field $v$,
  $$S_\theta(v)=\frac{C_\nu^2}{\|\theta \|_{\ell^2}^2} \sum_{k,\alpha} \theta_k^2\, \Pi\big[\sigma_{k,\alpha}\cdot\nabla \Pi(\sigma_{-k, \alpha} \cdot\nabla v) \big], $$
where $\Pi$ is the Leray projection operator. Let $\Pi^\perp$ be the operator which is orthogonal to $\Pi$, then we have
  $$S_\theta(v)= \frac{C_\nu^2}{\|\theta \|_{\ell^2}^2} \sum_{k,\alpha} \theta_k^2\, \Pi\big[\sigma_{k,\alpha}\cdot\nabla (\sigma_{-k, \alpha} \cdot\nabla v) \big] - \frac{C_\nu^2}{\|\theta \|_{\ell^2}^2} \sum_{k,\alpha} \theta_k^2\, \Pi\big[\sigma_{k,\alpha}\cdot\nabla \Pi^\perp (\sigma_{-k, \alpha} \cdot\nabla v) \big]. $$
It is not difficult to show that (cf. \cite[(2.4)]{FlaLuo21})
  $$\frac{C_\nu^2}{\|\theta \|_{\ell^2}^2} \sum_{k,\alpha} \theta_k^2\, \Pi\big[\sigma_{k,\alpha}\cdot\nabla (\sigma_{-k, \alpha} \cdot\nabla v) \big] =  \nu \Pi(\Delta v)= \nu  \Delta v; $$
thus, if we denote by
  $$S_\theta^\perp(v)= \frac{C_\nu^2}{\|\theta \|_{\ell^2}^2} \sum_{k,\alpha} \theta_k^2\, \Pi\big[\sigma_{k,\alpha}\cdot\nabla \Pi^\perp (\sigma_{-k, \alpha} \cdot\nabla v) \big], $$
then, it suffices to prove that, for $\theta^N$ defined in \eqref{theta-N},
  \begin{equation}\label{key-limit}
  \lim_{N\to \infty} S_{\theta^N}^\perp(v) = \frac25 \nu \Delta v \quad \mbox{holds in } L^2(\T^3,\R^3).
  \end{equation}
Below we will prove a weaker form of the limit in a particular case.

Recall that for a general vector field $X$, formally,
  \begin{equation*}
  \Pi^\perp X = \nabla \Delta^{-1} \div(X).
  \end{equation*}
On the other hand, if $X= \sum_{l\in \Z^3_0} X_l e_l$, $X_l\in \mathbb C^3$, then
  \begin{equation*}
  \Pi^\perp X= \sum_l \frac{l\cdot X_l}{|l|^2} l e_l = \nabla\bigg[ \frac1{2\pi {\rm i}} \sum_l \frac{l\cdot X_l}{|l|^2} e_l \bigg].
  \end{equation*}
We take a special vector field
  $$v= \sigma_{l,1} + \sigma_{l,2} = (a_{l,1} + a_{l,2})e_l, $$
where $a_{l,1}$ and $a_{l,2}$ are defined in Section \ref{subsec-notations}. Using any of the equalities above, one can prove (see \cite[Corollary 5.3]{FlaLuo21})
  $$\aligned
  S_{\theta^N}^\perp (v) &= -\frac{6\pi^2 \nu}{\|\theta^N \|_{\ell^2}^2} \sum_{\beta=1}^2 |l|^2 \Pi\bigg\{ \bigg[ \sum_{k} \big(\theta^N_k \big)^2 \sin^2(\angle_{k,l}) (a_{l,\beta}\cdot (k-l)) \frac{k-l}{|k-l|^2} \bigg] e_l \bigg\} \\
  &\sim -\frac{6\pi^2 \nu}{\|\theta^N \|_{\ell^2}^2} \sum_{\beta=1}^2 |l|^2 \Pi\bigg\{ \bigg[ \sum_{k} \big(\theta^N_k \big)^2 \sin^2(\angle_{k,l}) (a_{l,\beta}\cdot k) \frac{k}{|k|^2} \bigg] e_l \bigg\},
  \endaligned$$
where $\angle_{k,l}$ is the angle between the vectors $k$ and $l$, and $\sim$ means the difference between the two quantities vanishes as $N\to \infty$. The complex conjugate $\bar v$ of $v$ is divergence free, hence
  $$\aligned
  \big\< S_{\theta^N}^\perp (v), \bar v\big\>_{L^2} &\sim -\frac{6\pi^2 \nu}{\|\theta^N \|_{\ell^2}^2} \sum_{\beta=1}^2 |l|^2 \bigg\< \bigg[ \sum_{k} \big(\theta^N_k \big)^2 \sin^2(\angle_{k,l}) (a_{l,\beta}\cdot k) \frac{k}{|k|^2} \bigg] e_l, (a_{l,1} + a_{l,2}) e_{-l} \bigg\>_{L^2} \\
  &= -\frac{6\pi^2 \nu}{\|\theta^N \|_{\ell^2}^2} |l|^2 \sum_{\beta, \beta' =1}^2 \sum_{k} \big(\theta^N_k \big)^2 \sin^2(\angle_{k,l}) \frac{(a_{l,\beta}\cdot k) (a_{l,\beta'}\cdot k)}{|k|^2} .
  \endaligned $$
Recall that $\{a_{l,1}, a_{l,2} , \frac{l}{|l|} \}$ is an ONS of $\R^3$. By symmetry, the terms with $\beta \neq \beta'$ vanish, thus
  $$\aligned
  \big\< S_{\theta^N}^\perp (v), \bar v\big\>_{L^2} &\sim -\frac{6\pi^2 \nu}{\|\theta^N \|_{\ell^2}^2} |l|^2 \sum_{\beta =1}^2 \sum_{k} \big(\theta^N_k \big)^2 \sin^2(\angle_{k,l}) \frac{(a_{l,\beta}\cdot k)^2}{|k|^2}\\
  &= -\frac{6\pi^2 \nu}{\|\theta^N \|_{\ell^2}^2} |l|^2 \sum_{k} \big(\theta^N_k \big)^2 \sin^4(\angle_{k,l}),
  \endaligned $$
where we have used
  $$\sum_{\beta =1}^2 \frac{(a_{l,\beta}\cdot k)^2}{|k|^2} = 1- \bigg(\frac{k}{|k|} \cdot \frac{l}{|l|}\bigg)^2 = \sin^2(\angle_{k,l}). $$
Now, approximating the sums by integrals and changing to spherical variables yield
  $$\aligned \frac1{\|\theta^N \|_{\ell^2}^2} \sum_{k} \big(\theta^N_k \big)^2 \sin^4(\angle_{k,l}) &\sim \frac{\int_{\{ N\leq |x|\leq 2N\}} \frac{\sin^4(\angle_{x,l})}{|x|^{2\kappa}}\,\d x}{\int_{\{N\leq |x|\leq 2N\}} \frac{1}{|x|^{2\kappa}}\,\d x} = \frac{\int_N^{2N} \frac{\d r}{r^{2\kappa-2}} \int_0^\pi \sin^5 \psi\,\d\psi \int_0^{2\pi} \d\varphi}{\int_N^{2N} \frac{\d r}{r^{2\kappa-2}} \int_0^\pi \sin \psi\,\d\psi \int_0^{2\pi} \d\varphi} \\
  &= \frac12 \int_0^\pi \sin^5 \psi\,\d\psi = \frac8{15}.
  \endaligned $$
Thus, as $N\to \infty$,
  $$\big\< S_{\theta^N}^\perp (v), \bar v \big\>_{L^2} \to - 6\pi^2 \nu |l|^2 \cdot \frac8{15} = - \frac{16}5 \pi^2 \nu |l|^2 = \frac25 \nu \<\Delta v, \bar v\>_{L^2}, $$
since $\Delta v= -4\pi^2|l|^2 v = -4\pi^2|l|^2 ( \sigma_{l,1} + \sigma_{l,2})$.

\bigskip

\noindent \textbf{Acknowledgements.} The author is grateful to the financial supports of the National Key R\&D Program of China (No. 2020YFA0712700) and the National Natural Science Foundation of China (Nos. 11688101, 11931004, 12090014).


\begin{thebibliography}{9}

\bibitem{AZ17} H. Abidi, P. Zhang. On the global solution of 3-D MHD system with initial data near equilibrium. \emph{Comm. Pure Appl. Math.} \textbf{70} (2017), 1509--1561.

\bibitem{Arnold} L. Arnold. Stabilization by noise revisited. \emph{Z. Angew. Math. Mech.} 70 (1990), no. 7, 235--246.

\bibitem{ArnoldCW} L. Arnold, H. Crauel, V. Wihstutz. Stabilization of linear systems by noise. \textit{SIAM J. Control Optim.} \textbf{21} (1983), 451--461.

\bibitem{BarDaP} V. Barbu, G. Da Prato. Existence and Ergodicity for the Two-Dimensional Stochastic Magneto-Hydrodynamics Equations. \emph{Appl Math Optim} \textbf{56} (2007), 145--168.

\bibitem{BCZ17} J. Bedrossian, M. Coti Zelati. Enhanced dissipation, hypoellipticity, and anomalous small noise inviscid limits in shear flows. \emph{Arch. Rat. Mech. Anal.} \textbf{224} (2017), no. 3, 1161--1204.

\bibitem{BF20} L.A. Bianchi, F. Flandoli, Stochastic Navier--Stokes equations and related models. \emph{Milan J. Math.} \textbf{88} (2020), no. 1, 225--246.

\bibitem{Billingsley} P. Billingsley. Convergence of Probability Measures. Second edition. Wiley Series in Probability and Statistics: Probability and Statistics. A Wiley-Interscience Publication. \emph{John Wiley \& Sons, Inc., New York,} 1999.

\bibitem{BCF} Z. Brze\'{z}niak, M. Capi\'{n}ski, F. Flandoli. Stochastic Navier-Stokes equations with multiplicative noise. \emph{Stochastic Anal. Appl.} \textbf{10} (1992), no. 5, 523--532.

\bibitem{ButMyt} O. Butkovski, L. Mytnik. Regularization by noise and flows of solutions for a stochastic heat equation. \textit{Ann. Probab.} \textbf{47} (2019), 165--212.

\bibitem{Constantin} P. Constantin, A. Kiselev, L. Ryzhik, A. Zlato\v{s}. Diffusion and mixing in fluid flow. \textit{Ann. of Math.} \textbf{168} (2008), 643--674.

\bibitem{DapDeb} G. Da Prato, A. Debussche. Ergodicity for the 3D stochastic Navier-Stokes equations. \textit{J. Math. Pures Appl.} (9) 82 (2003), no. 8, 877--947.

\bibitem{DaPFla} G. Da Prato, F. Flandoli. Pathwise uniqueness for a class of SDE in Hilbert spaces and applications. \textit{J. Funct. Anal.} \textbf{259} (2010), 243--267.

\bibitem{DFPR} G. Da Prato, F. Flandoli, E. Priola, M. R\"{o}ckner. Strong uniqueness for stochastic evolution equations in Hilbert spaces perturbed by a bounded measurable drift. \textit{Ann. Probab.} \textbf{41} (2013), 3306--3344.

\bibitem{DFRV} G. Da Prato, F. Flandoli, M. R\"{o}ckner, A. Yu. Veretennikov. Strong uniqueness for SDEs in Hilbert spaces with nonregular drift. \textit{Ann. Probab.} \textbf{44} (2016), 1985--2023.

\bibitem{DFV} F. Delarue, F. Flandoli, D. Vincenzi. Noise prevents collapse of vlasov-poisson point charges. \emph{Comm. Pures Appl. Math.} \textbf{67} (2014), 1700--1736.

\bibitem{DuvLions} G. Duvaut, J. L. Lions. In\'equations en thermo\'elasticit\'e et magn\'eto-hydrodynamique. \emph{Arch. Rational Mech. Anal.} \textbf{46} (1972), 241--279.

\bibitem{FedFla11} E. Fedrizzi, F. Flandoli. Pathwise uniqueness and continuous dependence of SDEs with non-regular drift. \emph{Stochastics} \textbf{83} (2011), no. 3, 241--257.

\bibitem{FedFla13} E. Fedrizzi, F. Flandoli. Noise prevents singularities in linear transport equations. \emph{J. Funct. Anal.} \textbf{264} (2013), no. 6, 1329--1354.

\bibitem{FI19} Y. Feng, G. Iyer. Dissipation enhancement by mixing. \emph{Nonlinearity} \textbf{32} (2019), no. 5, 1810--1851.

\bibitem{FGL} F. Flandoli, L. Galeati, D. Luo. Scaling limit of stochastic 2D Euler equations with transport noises to the deterministic Navier--Stokes equations. \emph{J. Evol. Equ.} \textbf{21} (2021), no. 1, 567--600.

\bibitem{FGL21} F. Flandoli, L. Galeati, D. Luo. Delayed blow-up by transport noise. \emph{Comm. Partial Differential Equations} (2021), https://doi.org/10.1080/03605302.2021.1893748.

\bibitem{FlaGat95} F. Flandoli, D. Gatarek. Martingale and stationary solutions for stochastic Navier-Stokes equations. \emph{Probab. Theory Relat. Fields} \textbf{102} (1995), 367--391.

\bibitem{FGP} F. Flandoli, M. Gubinelli, E. Priola. Well posedness of the transport equation by stochastic perturbation. \emph{Invent. Math.} \textbf{180} (2010), 1--53.


\bibitem{FGP11} F. Flandoli, M. Gubinelli, E. Priola. Full well-posedness of point vortex dynamics corresponding to stochastic 2D Euler equations. \emph{Stoch. Proc. Appl.} \textbf{121} (2011), no. 7, 1445--1463.

\bibitem{FHLN20} F. Flandoli, M. Hofmanova, D. Luo, T. Nilssen. Global well-posedness of the 3D Navier--Stokes equations perturbed by a deterministic vector field. arXiv:2004.07528.

\bibitem{FlaLuoAoP} F. Flandoli, D. Luo. Convergence of transport noise to Ornstein-Uhlenbeck for 2D Euler equations under the enstrophy measure. \emph{Ann. Probab.} \textbf{48} (2020), no. 1, 264--295.

\bibitem{FlaLuo21} F. Flandoli, D. Luo. High mode transport noise improves vorticity blow-up control in 3D Navier-Stokes equations. \emph{Probab. Theory Related Fields} \textbf{180} (2021), no. 1--2, 309--363.


\bibitem{FlaRom02} F. Flandoli, M. Romito. Partial regularity for the stochastic Navier-Stokes equations. \emph{Trans. Amer. Math. Soc.} 354 (2002), 2207--2241.

\bibitem{FlaRom08} F. Flandoli, M. Romito. Markov selections for the 3D stochastic Navier-Stokes equations. \textit{Probab. Theory Related Fields} 140 (2008), no. 3--4, 407--458.

\bibitem{Galeati} L. Galeati. On the convergence of stochastic transport equations to a deterministic parabolic one. \emph{Stoch. Partial Differ. Equ. Anal. Comput.}  \textbf{8} (2020), no. 4, 833--868.

\bibitem{GassiatGess} P. Gassiat, B. Gess. Regularization by noise for stochastic Hamilton-Jacobi equations. \emph{Probab. Theory Relat. Fields} \textbf{173} (2019), 1063--1098.

\bibitem{Gess} B. Gess, Regularization and Well-Posedness by Noise for Ordinary and Partial Differential Equations. \emph{Stochastic partial differential equations and related fields}, 43--67, Springer Proc. Math. Stat., 229, \emph{Springer, Cham}, 2018.

\bibitem{GessMau} B. Gess, M. Maurelli. Well-posedness by noise for scalar conservation laws. \emph{Comm. Partial Diff. Eq.} \textbf{43} (2018), no. 12, 1702--1736.

\bibitem{Gyongy} I. Gy\"{o}ngy. Existence and uniqueness results for semilinear stochastic partial differential equations. \textit{Stochastic Process. Appl.} 73 (1998), no. 2, 271--299.

\bibitem{HeXin} C. He, Z. Xin. On the regularity of weak solutions to the magnetohydrodynamic equations. \emph{J. Differential Equations} \textbf{213} (2005), no. 2, 235--254.


\bibitem{Iyer} G. Iyer, X. Xu, A. Zlatos. Convection induced singularity suppression in the Keller-Siegel and other Non-liner PDEs, arXiv:1908.01941.

\bibitem{Kry-Roeck} N. V. Krylov, M. R\"{o}ckner. Strong solutions of stochastic equations with singular time dependent drift. \textit{Probab. Theory Related Fields} \textbf{131} (2005), 154--196.


\bibitem{LinZha} F. Lin, P. Zhang. Global small solutions to an MHD-type system: the three-dimensional case. \emph{Comm. Pure Appl. Math.} \textbf{67} (2014), no. 4, 531--580.

\bibitem{LuoS} D. Luo, M. Saal, Regularization by noise for the point vortex model of mSQG equations. \emph{Acta Math. Sin. (Engl. Ser.)} \textbf{37} (2021), no. 3, 408--422.

\bibitem{LuoSaal} D. Luo, M. Saal. A scaling limit for the stochastic mSQG equations with multiplicative transport noises. \emph{Stoch. Dyn.} \textbf{20} (2020), no. 6, 2040001, 21 pp.


\bibitem{MikRoz} R. Mikulevicius, B. L. Rozovskii. Global $L^2$-solutions of stochastic Navier-Stokes equations. \textit{Ann. Probab.} 33 (2005), no. 1, 137--176.

\bibitem{RozLot} B. L. Rozovsky, S. V. Lototsky. Stochastic evolution systems. Linear theory and applications to non-linear filtering. Second edition. Probability Theory and Stochastic Modelling, 89. \emph{Springer, Cham}, 2018.

\bibitem{Sango} M. Sango. Magnetohydrodynamic turbulent flows: existence results. \emph{Phys. D} \textbf{239} (2010), no. 12, 912--923.

\bibitem{SerTem} M. Sermange, R. Temam. Some mathematical questions related to the MHD equations. \emph{Comm. Pure Appl. Math.} \textbf{36} (1983), no. 5, 635--664.

\bibitem{Simon} J. Simon. Compact sets in the space $L^p(0,T; B)$. \emph{Ann. Mat. Pura Appl.} \textbf{146} (1987), 65--96.

\bibitem{SriSun} S. S. Sritharan, P. Sundar. The stochastic magneto-hydrodynamic system. \emph{Infin. Dimens. Anal. Quantum Probab. Relat. Top.} \textbf{2} (1999), no. 2, 241--265.

\bibitem{Veret} Yu. A. Veretennikov. On strong solution and explicit formulas for solutions of stochastic integral equations. \emph{Math. USSR Sb.} \textbf{39} (1981), 387--403.

\bibitem{WZ18} D. Wei, Z. Zhang. Global well-posedness of the MHD equations via the comparison principle. \emph{Sci. China Math.} \textbf{61} (2018), no. 11, 2111--2120.



\bibitem{WZZ20} D. Wei, Z. Zhang, W. Zhao. Linear inviscid damping and enhanced dissipation for the Kolmogorov flow. \emph{Adv. Math.} \textbf{362} (2020), 106963, 103 pp.

\bibitem{Zeng} Z. Zeng. Mild solutions of the stochastic MHD equations driven by fractional Brownian motions. \emph{J. Math. Anal. Appl.} \textbf{491} (2020), no. 1, 124296, 18 pp.

\bibitem{Zha05} X. Zhang. Strong solutions of SDES with singular drift and Sobolev diffusion coefficients. \emph{Stoch. Process. Appl.} \textbf{115} (11) (2005), pp. 1805--1818.

\bibitem{Zha11} X. Zhang. Stochastic homeomorphism flows of SDEs with singular drifts and Sobolev diffusion coefficients, Electron. J. Probab. 16 (38) (2011) 1096--1116.

\bibitem{Zl10} A. Zlato\v{s}, Diffusion in fluid flow: dissipation enhancement by flows in 2D. \emph{Commun. Partial Differ. Equ.} \textbf{35} (2010), no. 3, 496--534.

\end{thebibliography}
\end{document}